\newtheorem{theorem}{Theorem}[section]
\newtheorem{proposition}[theorem]{Proposition}
\newtheorem{lemma}[theorem]{Lemma}
\newtheorem{corollary}[theorem]{Corollary}
\theoremstyle{definition}
\newtheorem{example}[theorem]{Example}
\newtheorem{remark}[theorem]{Remark}
\numberwithin{equation}{section}
\DeclarePairedDelimiterX\Set[2]{\lbrace}{\rbrace}
 { #1 \,\delimsize|\,\mathopen{} #2 }
\def\({\left(}
\def\){\right)}
 \newcommand{\Z}{\mathbb{Z}}
  \newcommand{\N}{\mathbb{N}}
  \newcommand{\Q}{\mathbb{Q}}
 \newcommand{\tr}{\text{\bf tr}}
\title[]{Obstacles  to Topological Factoring of Toeplitz Shifts}
\author{Maryam Hosseini and Reem Yassawi}
\address{Department of Mathematics, School of Science and Technology, City St. George University of London, UK}
\email{maryam.hosseini@city.ac.uk}
\address{School of Mathematical Sciences, Queen Mary University of London, Mile End Road,
London E1 4NS, UK}
 \email{r.yassawi@qmul.ac.uk}
\subjclass[2020]{37B10, 37B05}
\keywords{Cantor minimal system, Toeplitz sequence, period structure, Bratteli-Vershik realizations,  topological factoring, local morphisms. }
\begin{document}
\maketitle

\begin{abstract}
For every Toeplitz sequence $x$ with  period structure $(q_i)_{i\geq 1}$, one can identify a period structure ${\bf p}=(p_i)_{i\geq 0}$ which leads to a Bratteli-Vershik realization of the associated Toeplitz shift;  we refer to this period structure as {\it constructive}. Let $(X,\sigma,x)$ and $(Y,\sigma,y)$ be Toeplitz shifts where $x\in X$ and $y\in Y$ are Toeplitz sequences  with constructive period structures $(p^n)_{n\geq 1}$ and $(q^n)_{n\geq 1}$, respectively. Using the Bratteli-Vershik realization of factor maps between Toeplitz shifts, we prove that if there exists a topological factoring $ \pi:(X,\sigma)\rightarrow (Y,\sigma)$ with  $\pi(x)=y$, then $q\mid p$. In particular, if $\pi$ is conjugacy, then $p=q$. We also prove that Toeplitz sequences are mapped to Toeplitz sequences through topological factorings.
\end{abstract} 
\section{Introduction}
Toeplitz shifts are  topological dynamical systems that serve as almost one-to-one extensions of odometers, which are rotations on the topological group dual to a subgroup of complex roots of unity. Building on  Williams' work \cite{Williams}, studies on Toeplitz shifts uncovered  a wide range of dynamical behaviors; they can be constructed to support any metrizable Choquet simplex of measures \cite{Downarowicz-1991, GJ, Ormes-1997}, exhibit any measurable spectrum \cite{Downarowicz-Lacroix-1996}, or posses  any possible entropy \cite{Downarowicz-Serafin}.
 
 There have been extensive studies of automorphisms of  Toeplitz shifts \cite{{BK-1990}, {BK-1992}, {Cortez-Petite-2020}, {CQY}, {Donoso-Durand-Maass-Petite-2017},  {Dymek-2017},  {Dymek-Kasjan-Keller-2024}} and their possible (expansive) factors  \cite{ {Downarowicz-1997}, {down2},  {downdur}}, which must themselves be Toeplitz. In this article we are interested in conditions which are necessary for the existence of topological factoring between Toeplitz shifts. We are motivated by Cobham's theorem \cite{Cobham}, which states that   if $p$ and $q$ are multiplicatively independent, then only eventually periodic sequences can be both $p$- and $q$-{\em automatic}.   Durand extended  Cobham's theorem to codings of all {\it  primitive} substitutional sequences \cite{D}, (see Section~\ref{subs} for the relevant definitions). There is also a version of  Cobham's theorem for  {\em Mahler } sequences,  which are a generalisation of automatic sequences to infinite alphabets \cite{AB13, SS-2019}, the latter giving a new proof of Cobham's theorem, as does the recent article by Krebs \cite{krebs}. A dynamical statement of  Cobham's proved by Coven, Dykstra and Lemasurier  shows that shift dynamical systems generated  by primitive substitutions of multiplicatively independent lengths cannot be topologically conjugate \cite{CDL-2014}; this implies Cobham's theorem for the substitutional Toeplitz sequences belonging to the state space. In this article we study extension of this result to Toeplitz sequences.

Toeplitz shifts $(X,\sigma)$  contain  sequences  $x=(x_n)_{n\in \mathbb Z}$ which are tiled by monochromatic arithmetic progressions, i.e., each $n$ belongs to an arithmetic progression $I$ such that  $x_n=x_m$ for each $m\in I$. Such a sequence $x$ is  called {\em Toeplitz}, and the ratios  of the arithmetic progressions is called  a {\em period structure} $(p_n)_{n\geq 1}$ for $x$. 

Period structures for Toeplitz sequences are not unique, but each period structure determines the same  underlying odometer space: it is the dual of the group $G= G(X)$ generated by the  $k$-th roots of unity where $k$ is any factor of $p_1\dots p_N$, $N\in \mathbb N$. 
This group $G$ is a topological invariant for Toeplitz shifts, as it is the group of continuous eigenvalues of the operator $f\mapsto f\circ\sigma$ on the space of continuous complex valued functions on $X$. From this it follows that if $(Y,\sigma)$ is a topological factor of $(X,\sigma)$, then $G(Y)\subset  G(X)$. In other words, topological factoring can only occur when the prime divisors  of a period structure for a Toeplitz sequence in $Y$ are contained in the prime divisors of a period structure for a Toeplitz sequence in $X$.   But even if the prime factors of period structures are exactly the same, we may detect  further obstacles to factoring.  Our main result in this direction is the following theorem.
\begin{theorem}\label{thm:main}
Let $(X,\sigma)$ and $(Y, \sigma)$ be two  Toeplitz shifts. Suppose  that  $x$ and $y$ are Toeplitz sequences in $X$ and $Y$ with constructive period structures $(p^i)_{i\geq 1}$  and $(q^i)_{i\geq 1}$,  respectively. If there exists a topological factor map $\pi:(X,\sigma)\rightarrow (Y,\sigma)$ with $\pi(x)=y$, then $q\mid p$.  In particular, if $\pi$ is a conjugacy  between the two systems mapping $x$ to $y$, then $p=q$.  
\end{theorem}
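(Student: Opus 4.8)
The plan is to reduce the factor map to a combinatorial object on the Bratteli--Vershik (BV) models determined by the two constructive period structures, and then to track how the periods of the periodic skeletons transform under $\pi$. First I would invoke the Curtis--Hedlund--Lyndon theorem: as $\pi$ is continuous and commutes with $\sigma$, it is a sliding block code of some finite radius $r$, so $(\pi x)_n$ depends only on $x_{n-r},\dots,x_{n+r}$. In parallel I would use the constructive period structures to present $(X,\sigma)$ and $(Y,\sigma)$ as their BV realizations, in which the period governing level $i$ is $p^i$ for $X$ and $q^i$ for $Y$, and the level-$i$ skeleton of a point is the syndetic set of coordinates lying in period-$p^i$ (resp.\ period-$q^i$) monochromatic arithmetic progressions. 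The BV realization of factor maps between Toeplitz shifts then permits me to assume, after telescoping both diagrams to a common cofinal set of levels, that $\pi$ is carried by a sequence of local morphisms compatible with this level structure.

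The heart of the argument is a transfer of periods. Fix a level $i$ and consider the level-$i$ skeleton of $x$, on which $x$ coincides with a genuinely $p^i$-periodic configuration. At any coordinate $n$ whose full radius-$r$ window $x_{n-r},\dots,x_{n+r}$ lies within this skeleton, the value $(\pi x)_n = y_n$ is a function of a $p^i$-periodic pattern, and hence the restriction of $y$ to such coordinates has period dividing $p^i$. Since $y$ is itself Toeplitz with \emph{constructive} period structure $(q^i)$ --- here I use the lemma that topological factorings send Toeplitz sequences to Toeplitz sequences, so that $y$ carries a well-defined constructive structure --- these coordinates form, up to a bounded shift in level, the level-$j$ skeleton of $y$ for the matching index $j$, whose intrinsic period is $q^j$. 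Canonicity of the constructive period structure forces $q^j$ to be the least period of this skeleton, so that $q^j \mid p^i$.

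It then remains to align the indices. Telescoping the two diagrams to a common sequence of levels, and using that the constructive period structure is intrinsic to each Toeplitz point, I would match the level-$i$ skeleton of $x$ with the corresponding skeleton of $y$ once the bounded radius-$r$ defect has been absorbed into the telescoping, and thereby conclude $q^i \mid p^i$ for every $i$; this is precisely the divisibility $q \mid p$. I expect the alignment to be the main obstacle: a radius-$r$ block code controls the image period only away from the aperiodic defects of $x$, so one must rule out that the code manufactures, near the boundary of a skeleton, a genuine period of $y$ failing to divide $p^i$, and one must check that no finer constructive period appears in $y$ than the one inherited from $x$. Realizing $\pi$ as a level-compatible local morphism on the telescoped BV diagrams is what makes both points tractable, since it confines the boundary effects to a single Kakutani--Rokhlin level. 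Finally, if $\pi$ is a conjugacy I would apply the same argument to $\pi^{-1}$ to obtain $p \mid q$; as termwise divisibility of constructive period structures is antisymmetric, $q \mid p$ and $p \mid q$ together give $p = q$.
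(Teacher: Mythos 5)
Your overall strategy---realize $\pi$ on Bratteli--Vershik models built from the constructive period structures and transfer periodicity of skeletons through a local code---has the same general shape as the paper's argument, and your ``transfer of periods'' step is sound: if the radius-$r$ window of $x$ about $n$ lies in ${\rm Per}_{p^i}(x)$, then $n\in {\rm Per}_{p^i}(y)$, so $y$ inherits period $p^i$ on a syndetic set. The gap is in the step you yourself flag as ``the main obstacle'': the alignment of indices. From the transfer alone you only obtain that for each $j$ there is \emph{some} $i$ with $q^j\mid p^i$; this is merely the containment of rational spectra and is consistent with $q\nmid p$ (take $p=2$, $q=4$: then $q^j=p^{2j}$ for every $j$). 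To conclude $q\mid p$ you must show the matching index grows \emph{in lockstep}, i.e.\ $q^{j}\mid p^{\,j+c}$ for a constant $c$ independent of $j$, and then compare exponents prime by prime as $j\to\infty$. Your proposal to ``absorb the radius-$r$ defect into the telescoping'' cannot deliver this: telescoping either diagram replaces $(p^i)_{i\geq 1}$ by $(p^{m_i})_{i\geq 1}$, which is no longer of the form $(\tilde p^{\,i})_{i\geq 1}$ unless the $m_i$ are arithmetic, so after telescoping the divisibility $q\mid p$ is no longer visible in the data.

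This lockstep property is exactly where the paper's work lies. It first normalizes the realization so that each level $n_i$ carrying a morphism $\eta_i:W_{n_i}^*\rightarrow V_i^*$ is the \emph{least} level with $q^i\mid p^{n_i}$ (Lemmas \ref{firstlevel} and \ref{main}, via symbol splitting and pushing the morphisms upward), and then proves in Lemma \ref{proper} that for such an optimal sequence one eventually has $n_{i+1}=n_i+1$: the word $\theta_{(1,i]}(v_1^i)$ occurs in the image sequence periodically with period $p^{\,n_i+1}$, while its essential period equals $q^{\,i+1}$ precisely because the period structure of $y$ is constructive, whence $q^{\,i+1}\mid p^{\,n_i+1}$. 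This is the ingredient your argument is missing; without it the proof does not close. (Your reduction of the conjugacy case to two applications of the factoring case is fine and matches the paper.)
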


 The proof of this theorem  relies on the result of \cite{aeg21} developed in a recent work of the first author with  Golestani and Oghli  \cite{gho} which give necessary and sufficient conditions for factoring between zero dimensional systems as a  generalization of  Curtis-Hedlund-Lyndon theorem \cite{Lind-Marcus}. There is an analogous statement given by Espinoza in
\cite{espinoza} for $S$-adic representations of minimal shifts. For the proof we first use the fact that  every Toeplitz sequence has a period structure  that we call it {\it constructive}, i.e., it satisfies the conditions of Lemma \ref{autol}. Constructive period structures have already been used by Gjerde and Johansen in \cite{GJ} to 
create Bratteli-Vershik realizations of  Toeplitz shifts.
Using the Bratteli-Vershik realizations of two Toeplitz shifts $(X,\sigma,x)$ and $(Y,\sigma,y)$,  we realize any factor map  $\pi:(X,\sigma)\rightarrow (Y,\sigma)$, $\pi(x)=y$,  by   a sequence of {\it morphisms} ${\bf \eta}=(\eta_n)_{n\geq 0}$ between ``appropriate'' levels of  the  associated ordered Bratteli diagrams. We elaborate this in Section \ref{model}, based on \cite[Section 5]{gho}.   Using the fact that we are representing Toeplitz shifts, we successively constrain the levels at which the morphisms occur in   Lemmas \ref{firstlevel}, \ref{main},  and \ref{proper}.  Then having the morphisms occurred at optimal levels, we show that the absence of certain conditions precludes the existence of topological factoring and consequently, conjugacy between two Toeplitz shifts with period structures as in Theorem \ref{thm:main}.  As stated in  Proposition \ref{general1}, a direct consequence of Lemma \ref{proper} is   that if there exists a topological factoring $\pi: (X,\sigma)\rightarrow (Y,\sigma)$ between Toeplitz shifts with general constructive period structures $(p_i)_{i\geq 1}$ and $(q_i)_{i\geq 1}$,  then there exists $i_0\geq 0$ such that for every $i \geq 1$, $q_i \mid p_{i_0 + i}$. Moreover, $\pi$ preserves the Toeplitz nature of any Toeplitz sequence $x \in (X,\sigma)$ when mapped into $(Y,\sigma)$, see Proposition \ref{general2}.
 As a corollary of Theorem 1.1,  we get Cobham's original result for constant length substitutions with coincidence; see Corollary \ref{Cobham}. 

\section{Prelimnaries}\label{pre}
\subsection{Dynamical systems }
A {\it topological dynamical system} is a pair $(X,T)$ such that $X$ is a compact metric space and $T:X\rightarrow X$ is a homeomorphism. The orbit of a point $x\in X$ is $\mathcal O(x)=\{T^nx\}_{n\in\Z}$. The system $(X,T)$ is called {\it minimal} if for every $x\in X$, $\overline{\mathcal O(x)}=X$.  A point $x\in X$ is {\it quasi-periodic} if  for any open neighbourhood $U\ni x$,  there exists $p>0$ such that for every $n\geq 1$, $T^{np}(x)\in U$ \cite{kurka}.  A dynamical   system $(Y, S)$ is called a {\it topological factor} of  $(X, T)$ if there exists a continuous onto map $\pi:X\rightarrow Y$ so that $\pi\circ T=S\circ \pi$. An equicontinuous factor is {\it maximal} if all other equicontinuous systems factor through it.
A {\it Cantor} dynamical system is a dynamical system such that $X$ is the Cantor set. A complex number $\lambda=\exp(2\pi it)$ is a continuous {\it eigenvalue} for $(X,T)$ if there exists a continuous function $f:X\rightarrow \mathbb S^1$ so that $f\circ T=\lambda f$. The set of all $t\in [0,1)$ such that $\exp(2\pi it)$ is an eigenvalue for $(X, T)$, is an additive group and is called the {\it continuous spectrum} of $(X,T)$ and is denoted by $E(X,T)$. The {\it rational spectrum} of $(X, T)$ is $E(X,T)\cap \Q$.  A system such that $E(X,T)\cap \Q$ is non-cyclic admits an equicontinuous factor $(\Z_{\bf p},+1)$ called an {\it odometer}, whose space is the topological group  $\Z_{\bf p}$ defined as follows.  Let ${\bf p}=(p_i)_{i\geq 0}$ be a sequence of natural numbers such that  $p_0=1$ and for every $i\geq 1$, $p_i\mid p_{i+1}$. The $\bf p$-odometer  denoted by $\Z_{\bf p}$, is defined as the inverse limit of the cyclic groups $ \Z/p_n\Z$,
$$ \Z_{\bf p}  =  \varprojlim \Z/p_n\Z, $$
with the natural projection maps  $\phi_n: \Z_{\bf p} \rightarrow   \Z/p_n\Z$. The map $+1$, acting on $\Z_{\bf p}$, is simply addition  of unity.

\subsection{Symbolic dynamics}
We recall some of the basics about the symbolic  dynamical systems that will be used in the sequel. 
We refer the reader to \cite{kurka, Lind-Marcus} for further details. Let $A$ be a finite set, called an {\it alphabet}, and $A^*$ be the set of all finite words on $A$.  Let $A^\Z$ be the set of all bi-infinite sequences $(x_n)_{n\in\Z}$ such that for every $n\in\Z$, $x_n\in A$. A {\it factor} or a {\it subword} of $x\in A^*$ is a word $w$, written as $w\sqsubset x$,  such that for some $1\leq i\leq |x|$, $x_ix_{i+1}\cdots x_{i+|w|-1}=w$. Equipped  with the product topology, $A^\Z$ is a compact metric space which is homeomorphic to the Cantor set. Then $A^\Z$ is called the {\it shift space}. The {\it shift map} is $\sigma: A^\Z\rightarrow A^\Z$  such  that for every $x\in {A^\Z}$, and every $n\in\Z$, $(\sigma(x))_n=x_{n+1}$. If $X\subseteq A^\Z$ is a closed and {\it $\sigma$-invariant}, i.e., $\sigma(X)=X$, then $(X,\sigma)$ is called a {\it shift} . A  system $(X,T)$, with $X$ a metric space, is {\em expansive} if there is some $\delta>0$ such that for any pair of distinct points $x, \tilde x$ in  $X$, there is an $n$ such  that $ d(T^n(x), T^n(y))\geq \delta$. A  Cantor system is expansive if and only if it is conjugate to a shift \cite{kurka}.

\subsubsection{Toeplitz shifts}\label{toeplitz}
Here we recall some of the basics about Toeplitz shifts. We refer the reader to \cite{Downarowicz2005, GJ, kurka} for more details. A non-periodic and quasi-periodic sequence in a shift system $(A^\Z,\sigma)$  is called {\it Toeplitz}.
Let  $x=(x_n)_{n\in\Z}$ be a non-periodic sequence in a shift space $A^\Z$ and consider $p\in\N$ and $a\in A$. Suppose that
$${\rm Per}_{p}(x,a):=\{n\in\Z: \ x_m=a\  \forall \ m\equiv n({\rm mod}\ p)\}$$
is non-empty.  Let ${\rm Per}_{p}(x):=\bigcup_{a\in A}{\rm Per}_{p}(x,p)$. Given $p$, define the {\em $p$-skeleton} of $x$ to be the sequence obtained from $x$ by replacing $x_n$ with a new symbol $*$ for $n\not\in {\rm Per}_{p}(x)$. We say that $p$ is an {\em essential period} for $x$ if the $p$-skeleton of $x$ is not periodic with any smaller period.  A {\it period structure} for $x$ is  a sequence of positive integers ${\bf p}=(p_i)_{i\geq 0}$ such that  
\begin{itemize}
\item $p_i$ is an essential period for $x$,
\item for every $i\geq 0$, $p_i\mid p_{i+1}$, and
\item  $\bigcup_{i=0}^\infty {\rm Per}_{p_i}(x)=\Z.$
\end{itemize}
An aperiodic sequence $x\in A^\Z$ with a period structure is called a {\it Toeplitz sequence}.   For a Toeplitz sequence $x\in A^\Z$, the shift $(X,\sigma)$ in which $X=\overline{\mathcal O}(x)$,  is called a {\it Toeplitz shift} and it is a minimal dynamical system.
The period structure of a Toeplitz sequence $x$  is not unique as the least common multiple of two essential period is an essential period. However, any period structure ${\bf p}=(p_i)_{i\geq 0}$ will generate the maximal equicontinuous factor of the Toeplitz shift generated by $x$ which is $(\Z_{\bf P},+1)$. The  {\em essential period} of a word $B=x_0\cdots x_\ell$ which appears in the Toeplitz sequence $x$ is the least common multiple  of the essential periods of $x_i$, $i=1,\ldots, \ell$. We will use the following lemma, which is proved in \cite[Lemma 6 and Theorem 8]{GJ}.
\begin{lemma}\label{autol}
Let $z$ be a Toeplitz sequence. Then there exist  a period structure
  ${\bf p}=\(p_i\)_{i\geq 1}$ for $z$ such that for every $i\geq 1$, the initial block $B_i:=z_{[0,p_i-1]}$ of $z$ has  essential period equal to $p_{i+1}$. 
 \end{lemma}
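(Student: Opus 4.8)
The plan is to start from an arbitrary period structure for $z$ and to manufacture from it a new, self-refining one by repeatedly applying the operation ``replace the current length by the essential period of the initial block of that length''. First I would fix any period structure $(\tilde p_j)_{j\geq 0}$ for $z$, which exists because $z$ is Toeplitz. I will use three standard facts about the sets ${\rm Per}_{\tilde p_j}(z)$: each is a union of congruence classes modulo $\tilde p_j$; $\tilde p_j\mid \tilde p_{j+1}$ gives ${\rm Per}_{\tilde p_j}(z)\subseteq {\rm Per}_{\tilde p_{j+1}}(z)$; and $\bigcup_j {\rm Per}_{\tilde p_j}(z)=\Z$. For a position $m$ I let $r(m)$ be the least $\tilde p_j$ with $m\in {\rm Per}_{\tilde p_j}(z)$; this is the essential period of the letter $z_m$ relative to the structure, and it is itself an essential period. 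Since all the $r(m)$ lie in the family $(\tilde p_j)$, which is totally ordered by divisibility, the essential period of the initial block of length $n$ collapses from a least common multiple to a maximum, $e(n):=\mathrm{lcm}\{r(m):0\le m<n\}=\max\{r(m):0\le m<n\}$, and is again one of the $\tilde p_j$.

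Next I would record the elementary bookkeeping. Because $e(n)$ is a least common multiple over an initial segment of positions, $e$ is nondecreasing with $e(n)\mid e(n')$ for $n\le n'$; and because its values sit inside the nested family $(\tilde p_j)$, every $e(n)$ is an essential period and any two of them are comparable by divisibility. I would then define the candidate structure by taking $p_1:=r(0)$ (note $p_1>1$, since aperiodicity forces ${\rm Per}_1(z)=\emptyset$) and setting $p_{i+1}:=e(p_i)$ for $i\geq 1$. By construction $p_{i+1}$ is exactly the essential period of the block $B_i=z_{[0,p_i-1]}$, which is the desired conclusion, and $p_i\mid p_{i+1}$ is immediate because consecutive terms are comparable structure elements with $p_{i+1}\ge p_i$.

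The crux, and the only genuinely nontrivial step, is to prevent the iteration from stalling, i.e.\ to show that $e(\tilde p)>\tilde p$ \emph{strictly} for every structure element $\tilde p$; this is where aperiodicity is used. Suppose instead $e(\tilde p)\le \tilde p$. Then $r(m)\le \tilde p$ for all $m\in[0,\tilde p)$, and since these are structure elements nested by divisibility, $r(m)\mid \tilde p$, whence $m\in {\rm Per}_{r(m)}(z)\subseteq {\rm Per}_{\tilde p}(z)$. Thus $[0,\tilde p)\subseteq {\rm Per}_{\tilde p}(z)$; but ${\rm Per}_{\tilde p}(z)$ is a union of residue classes modulo $\tilde p$ and now contains a full system of residues, so it equals $\Z$, forcing $z$ to be periodic with period $\tilde p$ and contradicting that $z$ is Toeplitz. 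Applying this to each $p_i$ yields $p_{i+1}=e(p_i)>p_i$, so $(p_i)$ is strictly increasing and $p_i\to\infty$. I expect this strict-growth step to be the main obstacle; everything else amounts to organising divisibility so that ``least common multiple'' becomes ``maximum'' inside one nested family of essential periods.

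Finally I would verify that $(p_i)$ exhausts $\Z$. Being a strictly increasing sequence drawn from the nested family $(\tilde p_j)$, the terms $p_i$ are cofinal in it, so for each $j$ some $p_i\ge \tilde p_j$, and then $\tilde p_j\mid p_i$ gives ${\rm Per}_{\tilde p_j}(z)\subseteq {\rm Per}_{p_i}(z)$. Hence $\bigcup_i {\rm Per}_{p_i}(z)\supseteq \bigcup_j {\rm Per}_{\tilde p_j}(z)=\Z$, and $(p_i)_{i\ge 1}$ is a period structure of the required form.
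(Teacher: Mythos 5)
Your proof is correct and follows essentially the same route as the source the paper relies on: the paper does not prove Lemma~\ref{autol} itself but cites \cite[Lemma 6 and Theorem 8]{GJ}, whose argument is exactly this iterative refinement $p_{i+1}:=e(p_i)$ inside a fixed period structure, with aperiodicity supplying the strict growth that prevents the recursion from stalling. The only point worth flagging is that your $r(m)$ is the essential period of the letter $z_m$ computed \emph{within} the chosen structure $(\tilde p_j)$ -- the reading of ``essential period of a letter'' under which the paper's (otherwise underspecified) definition of the essential period of a word, and hence the lemma's conclusion, is well defined.
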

We say that the period structure ${\bf p}=\(p_i\)_{i\geq 1}$ is {\it constructive} for the Toeplitz sequence $z$,  if it satisfies the condition of Lemma \ref{autol}.  Based on the proof of Theorem 8 of \cite{GJ}, for every period structure $(q_i)_{i\geq 1}$ of a Toeplitz sequence $x$,  there exists a constructive period structure $(p_i)_{i\geq 1}$.

\subsubsection{Substitutional dynamical systems.}\label{subs}
A well studied subfamily of Toeplitz shifts are {\em substitutional};  we briefly define these.  
Let $A$ be a finite {alphabet}.  A morphism $\theta:A^*\rightarrow A^*$ is called a {\it substitution}. Extending $\theta$ by concatenation, for  $\theta:A^{\Z}\rightarrow A^{\Z}$ a point $x\in A^{\Z}$ is called a fixed point of $\theta$ if $\theta(x)=x$.  For each $n\geq 0$, $\theta^n:A^*\rightarrow A^*$, that is the n-th iteration of $\theta$, is again a substitution. The substitution $\theta$ is  \emph{primitive} if there is a $k$ such that every $b\in A$ appears in $\theta^k(a)$ for each $a\in A$.   The substitution  $\theta$  is a (constant)  {\it length}-$\ell$ substitution if the length of
the word $\theta(a)$  equals  $\ell$ for every $a\in A$.
A constant length substitution $\theta$ has a {\it coincidence} if there exists some $n\geq 0$ and some $i$, $1\leq i\leq |\theta^n|$ such that the  $i$-th letter of $\theta^n(a)$ is a fixed letter $b\in A$ for all $a\in A$.
Given a substitution $\theta\colon A \rightarrow A^+$, the  language   $\mathcal{L}_{\theta}$    defined by  $\theta$ is
$$\mathcal{L}_{\theta} = \big\{w \in A^*:\, \mbox{$w$ is a subword of $\theta^n(a)$ for some  $a\in A$ and $n\in \mathbb N$}\big\}.$$
If $\theta$ is primitive then each word in $\mathcal{L}_{\theta}$ is left- and right-extendable, and $\mathcal{L}_{\theta}$ is closed under the taking of subwords, so this language defines a shift  $(X_\theta,\sigma)$, called the {\it substitutional shift} generated by  $\theta$, see \cite{Queffelec}. The shift associated to a primitive substitution is minimal. If $(X_\theta,\sigma)$ has no shift-periodic points then it is called \emph{aperiodic}.  In this article the substitutions we consider, are of constant length,  primitive, and aperiodic. If $\theta$ is primitive and aperiodic, then Dekking's theorem \cite{dekking} tells us that  $(X_\theta, \sigma)$ is a finite-to-one extension of  $(\Z_{\bf p},+1)$ where ${\bf p}=(\ell^ih)_{i\geq 1}$ for some $h\geq 1$ coprime with $\ell$. Furthermore, $(X_\theta, \sigma)$ is Toeplitz if and only if  $\pi: X_\theta\rightarrow  \Z_{\bf p}$ satisfies $\min_{z\in \Z_{\bf p}} |\pi^{-1}(z)|=h$. Dekking shows that $(X_\theta, \sigma)$ is Toeplitz
 if and only if $\theta$ has a  coincidence. He also  shows that  $(X_\theta, \sigma)$ has a constant height $h$ suspension over another length-$\ell$ substitution shift $(X_\theta, \sigma)$ which is its {\em pure base}, and which has a coincidence and height $h=1$.

Let $\theta$ be a length-$\ell$ substitution. A sequence $u$ is $\ell$-automatic if  $u=\tau (v)$, where $\tau: B\rightarrow A$ is a code which is the local rule of a shift commuting map from $B^\Z$ to $A^\Z$, and $v$ is a $\theta$-periodic point  with $\theta$ a length-$\ell$ substitution. We will write that  $u$ is generated by $(\tau,\theta)$.
 Cobham's theorem \cite{Cobham} tells us that if a sequence $v$ is both $k$- and $\ell$-automatic for multiplicatively independent $k$ and $\ell$, then $v$ is eventually shift-periodic. Any topological factor of a length-$\ell $ substitutional shift is topologically conjugate to a shift generated by an $\ell$-automatic sequence, and  the shift generated by an $\ell$-automatic sequence is 
topologically conjugate to a length-$\ell$ substitution shift \cite{MY}.

\subsection{Kakutani-Rokhlin partitions}\label{KR}
Let $(X,T)$ be a minimal Cantor system and consider a point $z\in X$. Let $(U_i)_{i\geq 0}$ with $U_0=X$, be a sequence of clopen sets converging to $z$. Using the  {\it first return time map,} of $U_i$s there exists a sequence of Kakutani-Rokhlin (K-R) partitions  $(\mathcal P_n)_{n\geq 0}$ of clopen sets that generate the topology such that  $\mathcal P_0=X$ and
$$ \mathcal P_n=\bigsqcup_{i=1}^{k(n)} \bigsqcup_{j=0}^{h_i (n)-1} B_{ij},\ \ B(\mathcal P_i)=\bigsqcup_{i=1}^{k(n)} B_{i0}=U_i, \ \ T(B_{ij})=B_{ij+1}.$$
For details see \cite{hps92, putnam}.
 
Consider the Toeplitz shift  $(X,\sigma)$ where $X=\overline{\mathcal O(z)}$ where $z$ is the Toeplitz sequence with constructive period structure ${\bf p}=\(p_i\)_{i\geq 1}$. Then for every $n\geq 1$, the initial block $B_n:=z_{[0,p_n-1]}$ is the base of the K-R partition  $(\mathcal P_n)_{n\geq 0}$, see \cite{DHS, GJ}.

\subsection{Bratteli–Vershik realizations of zero dimensional systems}\label{versh}
By  \cite{gps95,  hps92} having a sequence of K-R partitions for the Cantor minimal system $(X,T)$ we can realize the system by a dynamical system on an infinite graph which is called a Bratteli-Vershik realization. We recall some basics about ordered Bratteli diagrams and Vershik systems.

 A 
\emph{Bratteli diagram}
$B=(V, E)$ 
consists of an infinite sequence of finite, non-empty, pairwise disjoint sets 
$V_{0} = \{ v_{0} \}, V_{1}, V_{2}, \ldots$, 
called the vertices, another sequence of finite, non-empty, pairwise disjoint sets 
$E_{1}, E_{2}, \ldots$, 
called the 
\emph{edges}, 
and two maps 
$s : E_{n} \rightarrow V_{n-1}, r : E_{n} \rightarrow V_{n}$, 
for every 
$n \geq 1$, 
called the range and source maps, such that 
$r^{-1} ( v )$ 
is non-empty for all 
$v$ 
in 
$\cup_{n \geq 1} V_{n}$ 
and 
$s^{-1} ( v )$ 
is non-empty for all 
$v$ 
in 
$\sqcup_{n \geq 0} V_{n}$. For every $n\geq 1$ we have an {\it adjacency}  matrix $M_n$ of size $|V_n|\times |V_{n-1}|$ that its entries $M_n^{ij}$ shows the number of edges between $v_i\in V_n$ and $v_j\in V_{n-1}$. 
A Bratteli diagram is called {\it  simple} if there exists some sequence $\(n_k\)_{k\geq 1}$ so that $$\forall k\geq 1\ \ M_{n_k}\cdot M_{n_k+1}\cdots M_{n_{k+1}}>0.$$
An 
\emph{ordered Bratteli diagram}
$B=(V, E, \leq)$
consists of a Bratteli diagram 
$(V, E)$ 
and a partial order
$\leq$
on 
$E$ 
such that two edges 
$e, e^{\prime}$ 
in 
$E$ 
are comparable if and only if 
$r(e) = r(e^{\prime})$. The partial ordering can be extended to the set of all finite paths from $v_0$ that terminate at the same vertex $v\in V_i$ for some $i\geq 1$.
In such a diagram, we let 
$E_{\max}$ 
and 
$E_{\min}$ 
denote the set of maximal and minimal edges, respectively. For every $i\geq 1$, $h_j^i$ denotes the number of finite paths from $V$ to the vertex $v_j\in V_i$.

 Let  $B=(V,E,\leq)$ be an ordered Bratteli diagram. Then $(\theta_i)_{i\geq 1}$, $\theta_i:V_i\rightarrow V_{i-1}^*$ is defined  by 
$$ \theta_i(v)=s(e_1(v))s(e_2(v))\cdots s(e_k(v)),\ \ {\rm for} \ i\geq 2$$
where $\{e_j(v):\ j=1,\ldots, k(v)\}$ is the ordered set of the edges in $E_i$ with range $v$, and for $i=1$, $ \theta_1:V_1^*\rightarrow E_1^*$, is defined by $\theta_1(v)=e_1(v)\cdots e_\ell(v)$ where $e_1(v),\ldots, e_\ell(v)$ are all the edges in $E_1$ with range $v\in V_1$ and $e_1(v)<\cdots<e_\ell(v)$. Note that by concatenation, one can extend $\theta_i$ as 
$\theta_i:V_i^*\rightarrow V_{i-1}^*$. Also, $\theta_{(i,j]}=\theta_{i+1}\circ\theta_{i+2}\circ\cdots\circ\theta_j$ is a morphism from $V_j^*$ to $V_i^*$ for $0\leq i\leq j$. We say that a morphism $\theta:A^*\rightarrow B^*$ is letter-surjective if for any $b\in B$ there is $a\in A$ such that $b$ appears in $\theta(a)$.  As the morphisms $(\theta_i)_{i\geq 1}$ provide complete information about the order; when needed, we will write $B=(V, E,(\theta_i)_{i\geq 1})$ instead of  $B=(V,E,\leq)$.

A {\it telescoping} on an ordered Bratteli diagram $B=(V,E,(\theta_i)_{i\geq 1})$ involves choosing an increasing sequence of natural integers $\(m_i\)_{i\geq 1}$ to construct a new ordered Bratteli diagram $B'=(V',E',(\theta'_i)_{i\geq 1})$, where $V'_0=V_0$ and for each $i\geq 1$, $V'_i=V_{m_i}$ and $\theta'_i=\theta_{m_i}\circ\theta_{m_i-1}\circ\cdots\circ\theta_{m_{i-1}+1}$. 
There is an obvious notion of {\it isomorphism} between two ordered Bratteli diagrams. Two ordered Bratteli diagrams are {\it equivalent} if they are isomorphic or can be derived from one another by telescoping.

Let  $B=(V,E,\leq)$ 
be an ordered Bratteli diagram and $X_B$ be the compact space  of all infinite paths (originated from  $v_0\in V_0$), on $B$.  Let $X_B^{\rm max}$ denote the set of all {\em infinite maximal paths} in $X_B$, i.e., the elements of $X_B$ each of whose component edges belonging to $E_{\max}$, and similarly let $X_B^{\rm min}$ denote the set of all {\it infinite minimal paths} in $X_B$. 

The {\em Vershik} map $T_B:X_B\setminus X_B^{\rm max} \rightarrow X_B\setminus X_B^{\rm min}$ is defined by
$$T_B(e_0, e_1, \ldots, e_\ell,e_{\ell+1},\ldots)=(0,0,\ldots, e_{\ell}+1,e_{\ell+1},\ldots)$$
where $\ell$ is the first index that $e_\ell$ is not the max edge in $r^{-1}(r(e_{\ell}))$ and  $0$ denotes the minimal edge in $r^{-1}(r(e_i))$  for every  $i\geq 0$. 
$B$  is called \emph{properly ordered} if it has a unique infinite minimal path $x_{\min}$ and a unique infinite maximal path $x_{\max}$. Then the Vershik map can be extended to be a homeomorphism on $X_B$, that is $T_B: X_B\rightarrow X_B$ maps  the unique maximal path to the unique minimal path. Note that we can telescope a properly ordered diagram so that at each level, the maximal edge has the same source, and the minimal edge has the same source.

When all the adjacency matrices  have equal row sums, the Bratteli diagram is called {\it ERS}. In this note we only consider  simple properly ordered ERS  Bratteli diagrams.  A constructive period structure  is used to prove the following important result.
\begin{theorem}\cite[Theorem 8]{GJ}\label{auto}
The family of all expansive  Vershik systems on  properly ordered ERS Bratteli diagrams coincides with the family of  Toeplitz shifts up to conjugacy.
\end{theorem}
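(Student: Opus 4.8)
\emph{Overall plan.} The statement is an equivalence between two classes of systems, so the plan is to prove the two inclusions separately, in each case routing through the odometer factor that the ERS condition secretly encodes. Throughout I will use two facts recorded earlier: a Cantor system is expansive if and only if it is conjugate to a subshift \cite{kurka}, and a minimal subshift is a Toeplitz shift precisely when it is an almost one-to-one extension of an odometer \cite{Downarowicz2005, Williams}. The combinatorial bridge is the remark that, for a properly ordered diagram, ERS is equivalent to all towers at a given level having a common height: writing $H_n$ for the common number of paths from $v_0$ to a vertex of $V_n$ and $\ell_n$ for the common row sum of $M_n$, the induction $H_n=\ell_n H_{n-1}$ (with $H_0=1$) gives equal heights at once, since $H_n^{v}=\sum_w M_n^{vw}H_{n-1}^{w}=\ell_n H_{n-1}$ as soon as the level $n-1$ heights are equal.

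\emph{From Toeplitz shifts to ERS diagrams.} Given a Toeplitz sequence $z$, I would first replace its period structure by a constructive one $(p_i)_{i\ge 1}$ as in Lemma \ref{autol}, and then form the K-R partitions $\mathcal P_n$ of Section \ref{KR} with base the level-$n$ tower over the initial block $B_n=z_{[0,p_n-1]}$. By \cite{hps92, gps95} these yield a simple (by minimality) and properly ordered Bratteli diagram whose Vershik system is conjugate to $(X,\sigma)$, and the system is expansive because it is a subshift. The only point needing verification is that the diagram is ERS, that is, that every level-$n$ tower has height exactly $p_n$. This follows by taking the base $U_n$ to be the preimage $\pi^{-1}(\{g\equiv 0 \bmod p_n\})$ of a clopen coset under the odometer factor $\pi\colon X\to\Z_{\bf p}$: the factor relation forces every return time to $U_n$ to be a multiple of $p_n$, while $\pi(\sigma^{p_n}x)=\pi(x)+p_n$ lies in the same coset, so $\sigma^{p_n}x\in U_n$ and the first return time is the constant $p_n$. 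Hence all heights equal $p_n$ and the diagram is ERS.

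\emph{From ERS diagrams to Toeplitz shifts.} Conversely, let $(X_B,T_B)$ be an expansive Vershik system on a properly ordered ERS diagram. Simplicity gives minimality, and expansiveness lets me realize it as a minimal subshift. To produce the odometer factor, I would take $U_n$ to be the base of the level-$n$ tower; the uniform height $H_n$ makes the first return time to $U_n$ the constant $H_n$, so the rank-within-the-current-tower function $\phi_n\colon X_B\to\Z/H_n\Z$ satisfies $\phi_n\circ T_B=\phi_n+1 \bmod H_n$. After telescoping so that $U_{n+1}\subseteq U_n$, the $\phi_n$ become compatible and assemble into a continuous equivariant surjection $\pi=(\phi_n)_n\colon X_B\to \Z_{(H_n)}$ onto the odometer. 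It then remains only to show that $\pi$ is almost one-to-one, after which \cite{Downarowicz2005, Williams} identifies the system as a Toeplitz shift.

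\emph{The main obstacle.} The essential difficulty, and the only place where proper ordering enters, is proving that $\pi$ is almost one-to-one. Two distinct paths with the same $\pi$-image must agree in all their rank coordinates $\phi_n$ while eventually passing through different vertices; the plan is to show that such a coincidence can propagate to infinity only along the tails of the distinguished maximal and minimal paths, so that the set of points with non-singleton fibre is contained in the $T_B$-orbit of $x_{\max}=x_{\min}$, a countable (hence meagre and null) set. This is exactly where uniqueness of $x_{\max}$ and $x_{\min}$ is indispensable, and where the carry mechanism of the Vershik map must be tracked carefully; without proper ordering the odometer factor would fail to be almost one-to-one and the resulting subshift need not be Toeplitz.
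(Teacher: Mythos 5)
The paper does not actually prove this statement: it is quoted verbatim from Gjerde--Johansen \cite{GJ}, and the only proof-related information the paper records is that the forward direction comes from a constructive period structure via K--R partitions whose level-$n$ towers all have height $p_n$. Your outline agrees with that strategy. In the forward direction, taking the base $U_n=\pi^{-1}(p_n\Z_{\bf p})$ gives constant return time $p_n$ (hence ERS), and $\bigcap_n U_n=\pi^{-1}(0)=\{z\}$ because a Toeplitz point has a singleton fibre over the maximal equicontinuous factor; this is essentially the construction the paper alludes to. In the converse direction you correctly isolate ERS as the reason the tower-rank maps $\phi_n$ are compatible ($\phi_n=\phi_{n+1}\bmod H_n$ only because each level-$(n{+}1)$ tower is a stack of level-$n$ towers of equal height) and assemble into an odometer factor.

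However, the step you call ``the main obstacle'' is aimed at a false statement and would fail as planned. First, $x_{\max}\neq x_{\min}$ in a properly ordered diagram: one has $T_B(x_{\max})=x_{\min}$, but for an infinite system these are distinct paths. More seriously, the set of points with non-singleton $\pi$-fibre is in general neither countable nor contained in a single orbit: for a non-regular Toeplitz shift the non-Toeplitz points form an uncountable set that can even have full measure for every invariant measure, so no argument can confine the bad fibres to the orbit of $x_{\min}$. Fortunately you do not need this. Almost one-to-one means that \emph{some} fibre is a singleton (equivalently, by minimality, that the singleton-fibre points are residual), and that is immediate from proper ordering: $\phi_n(x)=0$ exactly when the truncation of $x$ to level $n$ is a minimal path, so $\pi^{-1}(0)=\bigcap_n U_n=X_B^{\rm min}=\{x_{\min}\}$. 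Replacing your final paragraph with this one-line computation, and then invoking the characterisation of minimal Toeplitz subshifts as almost one-to-one symbolic extensions of odometers \cite{Downarowicz2005, Williams}, closes the argument; you should also remark that simplicity together with expansiveness excludes the degenerate case of eventually trivial row sums, where the target ``odometer'' would be finite and the system periodic.
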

Furthermore, inspection of the proof of \cite[Theorem 8]{GJ} tells us that  the ERS Bratteli-Vershik representation of  a Toeplitz shift can be taken  so that the row sum of the $n$-th adjacency matrix equals $p_n/p_{n-1}$ ($p_0=1$), where $(p_n)$ is a constructive  period structure,  and such that for each $n$ all minimal (resp. maximal) edges at level $n$ have the same source.

Consider the Toeplitz shift $(X,\sigma)$ with the Toeplitz sequence $z\in X$ and the realization of $(X,\sigma)$ by the sequence of K-R partitions $\(\mathcal P_k\)_{k\geq 0}$ so that $\bigcap_{k\geq 0}B(\mathcal P_k)=\{z\}$. 

 Let $B=(V, E,\leq)$, $V=\sqcup_{k\geq 0}V_k$ be the associated Bratteli-Vershik model which means that for every $k$, each vertex in $V_k$ is a representative of a tower in $\mathcal P_k$. So $|V_k|$ is equal to the number of towers in $\mathcal P_k$. A vertex $v\in V_k$ is connected (via an edge in $E_k$) to a vertex $w\in V_{k-1}$ if the tower in $\mathcal P_{k-1}$ which is associated with $w$, appears as a sub-tower of  that tower in $\mathcal P_k$ which is associated with $v$. Moreover,  the unique infinite minimal path $x_{\min}\in X_B$  is associated with $z$. 
So for every $k$, one can  consider $\mathcal P_k$ as the set of all finite paths from $V_0$ to $V_k$.  Then there exists a truncation map $\tau_k:X_B\rightarrow \mathcal P_k$ that restricts each infinite path in $X_B$ to its initial segment of length $k$. So the natural projections are $\tilde{\tau}_k: X_B\rightarrow \mathcal P_k^\mathbb Z$ that are defined by 
 \begin{equation}\label{proj}
 \tilde{\tau}_k(x)=(\tau_k(T_B^nx))_{n\geq 0}.
 \end{equation}
  It turns out that at each level $k$, we have a shift:
$$(\tilde{\mathcal P}_k,\sigma) \ \ {\rm where} \ \ \tilde{\mathcal P}_k=\tilde{\tau}_k(X_B)\subseteq \mathcal P_k^\mathbb Z, \ \ \sigma(\tilde{\tau}_k(x))=\tilde{\tau}_k(T_Bx).$$ 
Following  \cite{GJ}, we denote $(\tilde{\mathcal P}_k,\sigma)$ by $(X_k,\sigma)$ and in the sequel we may recall them by {\it the intermediate shifts associated with} $(X,\sigma)$. Note that by the definition of $\tilde{\mathcal P}_k$, the map $\tilde{\tau}_k:X_B\rightarrow X_k$ is  a topological factoring 
\begin{equation}\label{trunc}
\tilde\tau_k\circ T_B=\sigma\circ\tilde\tau_k.
\end{equation}
Let $\tr_k: (\mathcal P_k,\sigma)\rightarrow (\mathcal P_{k-1},\sigma)$ be the natural map that sends a path of length $k$ (originated at $V_0$)  to its initial path of length $k-1$, and extend $\tr_k$ component-wise to a map $\tilde{\tr}_k: (\tilde{\mathcal P}_k,\sigma)\rightarrow (\tilde{\mathcal P}_{k-1},\sigma)$; then  for each $k$, $\tilde{\tr}_k$ commutes with the shift, i.e., is a {\em sliding block code with radius 0}, and  $\tilde{\tr}_k\circ\tilde\tau_{k}=\tilde\tau_{k-1}$ for all $k\geq 1$. Consequently, 
\begin{equation}\label{inlim}
 \xymatrix{(X_0,T_0)
&(X_1,\sigma)\ar[l]_{\tilde\tr_{1}} &(X_2,\sigma)\ar[l]_{\tilde\tr_{2}} &\cdots\ar[l]_{\tilde\tr_3}& (X_B,T_B)\ar[l]
}
\end{equation}
$$(X,\sigma)\simeq(X_B,T_B)=\varprojlim_{k}(X_k,\sigma).$$

\subsection{Topological factoring and Bratteli-Vershik realizations}\label{model}
Using the inverse limit realization of Cantor minimal systems in (\ref{inlim}), one can model the topological factoring between two such systems as the limit  of a sequence of sliding block codes between the intermediate shifts associated with the two systems. This has been verified in \cite{gho} for the general case of zero dimensional dynamical systems. In the following theorem we state that for the special case of Cantor minimal systems. 
\begin{theorem}\cite[Theorem 1.3]{gho}\label{inversesys}
Consider minimal Cantor  dynamical systems $(X,T)$ and $(Y, S)$.
Let $x_0\in X$ and $y_0\in Y$. 
Then there exists
$ \pi:(X,T)\longrightarrow (Y,S)\ \ {\rm with} \ \  { \pi(x_0)=y_0}$
if and only if  for every  pair of sequences of K-R partitions  $\(\mathcal P_n\)_{n\geq 0}$ and $\(\mathcal Q_n\)_{n\geq 0}$ for $(Y,S,\{y_0\})$ and $(X,T,\{x_0\})$ respectively, and the inverse limit systems associated to them as in (\ref{inlim}),  there exists a sequence of natural numbers $\(n_i\)_{i\geq 0}$ and a sequence of radius $0$ sliding block codes $\pi_i:(X_{n_i},\sigma)\rightarrow (Y_i,\sigma)$  for all $i\geq 0$  such that 
 all of the rectangles   between the following  inverse limit sequences commute:
 \begin{equation*}
\xymatrix{(X_{0},\sigma)\ar[d]_{\pi_{0}}
&(X_{n_1},\sigma)\ar[l]_{_{\tilde\tr_{(0,n_1]}}}\ar[d]_{\pi_{1}} &(X_{n_2},\sigma)\ar[l]_{\tilde\tr_{(n_1,n_2]}}\ar[d]_{\pi_{2}} &\cdots\ar[l]_{\tilde\tr_{(n_2,n_3]}}& (X,T,\{x_0\})\ar[l]\ar[d]_{\pi}\ \ \  \\
(Y_{0},\sigma)
&(Y_1,\sigma)\ar[l]^{\tilde\tr_1} &(Y_{2},\sigma)\ar[l]^{\tilde\tr_2}&\cdots\ar[l]^{\tilde\tr_3}&(Y,S, \{y_0\})\ar[l] \   
 }
\end{equation*}
where $\tilde\tr_{(i,j]}:=\tilde\tr_{i+1}\circ\tilde\tr_{i+2}\circ\cdots\circ\tilde\tr_{{j}}$ and $\tilde\tr_i$'s are the local truncation maps as in (\ref{inlim}) for either $(X,T)$ or $(Y,S)$. 
\end{theorem}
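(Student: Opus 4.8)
The plan is to prove the two implications of this equivalence separately, treating the backward one as soft and reserving the effort for the forward one. For the backward implication, suppose I am given the levels $(n_i)_{i\geq 0}$ and the radius-$0$ codes $\pi_i:(X_{n_i},\sigma)\to(Y_i,\sigma)$ making every rectangle commute. I would invoke the inverse-limit presentations $(X,T)\simeq\varprojlim_i(X_{n_i},\sigma)$ and $(Y,S)\simeq\varprojlim_i(Y_i,\sigma)$ from (\ref{inlim}). The commuting rectangles say exactly that $(\pi_i)_i$ is a morphism of inverse systems, so the universal property of $\varprojlim$ produces a continuous limit map $\pi:=\varprojlim_i\pi_i:X\to Y$. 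Each $\pi_i$ is a sliding block code, hence intertwines the shifts, and passing to the limit gives $\pi\circ T=S\circ\pi$. Since the partitions are centred at $x_0$ and $y_0$, these points are the distinguished minimal paths of the two diagrams, and one verifies that $\pi$ is pointed, $\pi(x_0)=y_0$; the image $\pi(X)$ is then a closed $S$-invariant set containing $y_0$, so minimality of $(Y,S)$ upgrades $\pi$ to a surjection, that is, to a topological factor map.

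For the forward implication, fix $\pi$ with $\pi(x_0)=y_0$ and any two sequences of K-R partitions, producing the intermediate shifts $(X_n,\sigma)$ and $(Y_i,\sigma)$; the alphabet of each $Y_i$, namely the set of level-$i$ paths of the diagram for $Y$, is \emph{finite}. For each fixed $i$ I would form the composite $\Pi_i:=\tilde{\tau}_i\circ\pi:X\to Y_i$, with $\tilde{\tau}_i$ the level-$i$ truncation for $Y$; this is a continuous, shift-commuting map of $X$ onto the finite-alphabet subshift $Y_i$. The whole point is to realise each $\Pi_i$ as a radius-$0$ code emanating from a single finite level of the inverse system for $X$.

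The crux is this localisation. For a level-$i$ path $Q$ of $Y$, the cylinder $[Q]\subseteq Y$ is clopen, so $\pi^{-1}([Q])$ is clopen in $X$; as $Q$ ranges over the finite alphabet of $Y_i$, these sets form a finite clopen partition of $X$. Because the initial-path cylinders of the diagram for $X$ refine to generate the topology, every clopen subset of $X$ is a union of level-$n$ cylinders for all large $n$. Hence there is an $n_i$ and a map $F$ from the level-$n_i$ paths of the diagram for $X$ to the level-$i$ paths of the diagram for $Y$ such that $\tau_i(\pi(w))=F(\tau_{n_i}(w))$ for every infinite path $w\in X$. Reading this across the Vershik orbit and using (\ref{trunc}), the induced radius-$0$ sliding block code $\pi_i:(X_{n_i},\sigma)\to(Y_i,\sigma)$ acting coordinatewise through $F$ satisfies $\pi_i\circ\tilde{\tau}_{n_i}=\tilde{\tau}_i\circ\pi$, which is precisely commutativity of the $i$-th rectangle. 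Choosing the $n_i$ strictly increasing is harmless, since a clopen set that is a union of level-$n$ cylinders is also a union of level-$n'$ cylinders for $n'\geq n$; and the identities $\tilde{\tr}_i\circ\tilde{\tau}_i=\tilde{\tau}_{i-1}$ on both sides then force $\tilde{\tr}_i\circ\pi_i=\pi_{i-1}\circ\tilde{\tr}_{(n_{i-1},n_i]}$, so the full ladder commutes at once.

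The step I expect to carry the real weight is the localisation of $\Pi_i$ to a single finite level with radius $0$: it is exactly a Curtis–Hedlund–Lyndon phenomenon for factor maps out of an inverse limit, resting on the generating property of the initial-path clopen partitions together with the finiteness of the target alphabet, as supplied by \cite{aeg21, gho}. The remaining delicacies are bookkeeping: selecting one increasing sequence $(n_i)$ that works simultaneously at every level, and tracking the base points so that $\pi(x_0)=y_0$ is both used in the forward direction and recovered in the backward one.
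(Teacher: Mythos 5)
The paper does not actually prove this statement: it is imported verbatim from \cite[Theorem 1.3]{gho}, so there is no in-paper argument to compare yours against. Judged on its own terms, your reconstruction follows what is essentially the strategy of the cited source. The forward direction is sound: the preimages $\pi^{-1}([Q])$ of the finitely many level-$i$ cylinders of $Y$ form a clopen partition of $X$; since the nested K-R partitions generate the topology, each such clopen set is a union of level-$n_i$ cells for some $n_i$; the resulting local rule $F$ gives a radius-$0$ code with $\pi_i\circ\tilde\tau_{n_i}=\tilde\tau_i\circ\pi$, and surjectivity of $\tilde\tau_{n_i}$ onto $X_{n_i}$ forces the rectangles to commute. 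The backward direction via the universal property of the inverse limit, with minimality upgrading the closed invariant image to all of $Y$, is also correct as far as it goes.

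The one genuine gap is the pointedness in the backward direction, which you dispatch with ``one verifies that $\pi(x_0)=y_0$'' and later demote to bookkeeping. It is not bookkeeping: an arbitrary compatible family of radius-$0$ codes gives you \emph{a} factor map, and the whole content of the pointed version is that this map sends $x_0$ to $y_0$. What is needed is to show that for every $i$ the cell $(\pi_i(\tilde\tau_{n_i}(x_0)))_0=F_i(\tau_{n_i}(x_0))$ is precisely the cell of the $i$-th partition of $Y$ containing $y_0$; since those cells lie in the bases $B(\mathcal P_i)$ and $\bigcap_i B(\mathcal P_i)=\{y_0\}$, this would finish the argument. Proving it requires using the tower structure of the intermediate shifts rather than the abstract inverse-limit formalism: one must argue that $\tau_{n_i}(x_0)$ is a base cell (because $x_0$ lies in the base of every tower) and that a shift-commuting radius-$0$ code compatible with the truncation ladder necessarily carries base cells to base cells, so that the images form a nested sequence of base cells converging to $y_0$. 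This is exactly the point where the premorphism formalism of \cite{aeg21, gho} (morphisms sending minimal paths to minimal paths, cf.\ the remark after Proposition \ref{factor1}) does real work, and your proposal leaves it unargued.
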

On the other hand, there is a model for topological factoring between two Cantor minimal systems called {\it ordered premorphism}  introduced in \cite{aeg21}. The concept of ordered premorphism has been reinterpreted in \cite{gho} as a sequence of morphisms between two ordered Bratteli diagrams. This sequence of morphisms  ``localizes"  the factoring map $\pi:(X,T)\rightarrow (Y,S)$ at some specific  levels of the Bratteli diagrams  associated to the two systems, by intertwining the finite collections of (finite) paths restricted to those  levels. In fact, by the results of \cite{aeg21, gho},  having such a sequence of  morphisms $(\eta_i)_{i\geq 0}$, is equivalent to  the existence of  a factoring map between the two systems.  We review the notation related to these morphisms and a key result about them from \cite{gho}.

Let $B=(W,E, \(\xi_k\)_{k\geq 1} )$ and $C=(V,E',\(\theta_k\)_{k\geq 1})$  be  two ordered Bratteli diagrams, and let  $\xi_{(\ell,k]}:=\xi_{\ell+1}\circ\xi_{\ell+2}\cdots\circ \xi_{k}$ for $k>\ell$.
Suppose that we have  a morphism  $\eta_k:W_{n_k}\rightarrow V_k^*$, for some $k\geq 1$, i.e.,  for each vertex $w\in W_{n_k}$ there exists some $m\geq 1$ such that $\eta_k(w)=v_{i_1}\cdots v_{i_m}\in V_k^*$.  So the morphism $\eta_k$ can be extended to $W_{n_k}^*$ by concatenation. 
\begin{proposition}\cite[Proposition 5.1]{gho}\label{factor1}
Let $(X,T)$ and $(Y,S)$ be Cantor minimal systems, and let $x_0\in X$ and $y_0\in Y$. There exists a topological factoring $\pi:X\rightarrow Y$ with $\pi(x_0)=y_0$ if and only if  
for each pair of Bratteli-Vershik models $B=(W,E,(\xi_i)_{i\geq 1})$ and $C=(V,E', (\theta_i)_{i\geq 1})$ for $(X,T,\{x_0\})$ and $(Y,S,\{y_0\})$ respectively,   there exist  an increasing sequence $\(n_i\)_{i\geq 0}$ of non-negative integers with $n_0=0$,  and a sequence $(\eta_i)_{i\geq 0}$ of non-erasing letter-surjective morphisms $\eta_i:W_{n_i}^*\rightarrow V_i^*$,  with $\eta_0(v_0)= w_0$ such that the following diagram commutes for all $i,j\geq 0$:
\begin{equation}\label{diagram}
\xymatrix{W_{n_i}^*\ar[d]_{\eta_{i}}
& \ \ W_{n_{j}}^*\ar[l]_{_{\xi_{(n_i,n_{j}]}}}\ar[d]_{\eta_{j}} &\ \  \\
\ V_{i}^*
& \ \ V_{j}^*.\ar[l]^{\theta_{(i,j]}} &\   
 }
\end{equation}
\end{proposition}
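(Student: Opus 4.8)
The plan is to derive Proposition~\ref{factor1} from Theorem~\ref{inversesys} by building a dictionary that translates the \emph{sliding block code} picture of a factor map into the \emph{morphism} picture and back. Theorem~\ref{inversesys} already equates the existence of $\pi$ with $\pi(x_0)=y_0$ to the existence of an increasing sequence $(n_i)_{i\geq 0}$ together with radius-$0$ sliding block codes $\pi_i:(X_{n_i},\sigma)\to(Y_i,\sigma)$ making every rectangle of the two inverse limit sequences commute. Since Proposition~\ref{factor1} quantifies over the \emph{same} data (a fixed pair of Bratteli--Vershik models together with a sequence $(n_i)$), it suffices to show, for a fixed pair of models, that a commuting family $(\pi_i)$ of radius-$0$ codes exists if and only if a commuting family $(\eta_i)$ of non-erasing, letter-surjective morphisms $\eta_i:W_{n_i}^*\to V_i^*$ exists; the equivalence with the factoring itself is then immediate from Theorem~\ref{inversesys}.

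The heart of the matter is the dictionary between the path coding and the vertex coding of the intermediate shifts. The alphabet of $(X_{n_i},\sigma)$ is the set of finite paths of length $n_i$ in $B$, and each point decomposes, along the Vershik order, into consecutive \emph{tower blocks}: the block attached to a vertex $w\in W_{n_i}$ is the word listing, in increasing order, the paths from the base vertex to $w$. Thus a vertex $w$ is precisely the data of one tower block, and the composed order morphism $\xi_{(n_i,n_j]}$ records how a level-$n_j$ block is assembled by concatenating level-$n_i$ blocks. In particular, the radius-$0$ truncation $\tilde\tr_{(n_i,n_j]}$ of the inverse limit in~(\ref{inlim}) is, read at the level of blocks, exactly the vertex rewriting given by $\xi_{(n_i,n_j]}$ (and likewise $\theta_{(i,j]}$ for $Y$). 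This is the identification that lets one pass between the horizontal maps of Theorem~\ref{inversesys} and the horizontal maps of diagram~(\ref{diagram}).

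Given a commuting family $(\pi_i)$, I would define $\eta_i(w)$ for $w\in W_{n_i}$ by applying the letter map $\pi_i$ across one occurrence of the $w$-block in a generic point of $X_{n_i}$ and then reading the resulting word over the alphabet of $Y_i$ as a concatenation of $Y_i$-tower blocks; $\eta_i(w)\in V_i^*$ is the word of the corresponding $V_i$-vertices. Conversely, from morphisms $(\eta_i)$ one reconstructs the letter map $\pi_i$ block by block, using the internal floor order inside each $w$-block together with the block word $\eta_i(w)$. Under the dictionary of the previous paragraph, the commuting square $\eta_i\circ\xi_{(n_i,n_j]}=\theta_{(i,j]}\circ\eta_j$ of~(\ref{diagram}) translates exactly into the commuting rectangle $\pi_i\circ\tilde\tr_{(n_i,n_j]}=\tilde\tr_{(i,j]}\circ\pi_j$ of Theorem~\ref{inversesys}. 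The side conditions match up as well: $\eta_i$ is automatically non-erasing because a letter map sends a nonempty block to a nonempty word; letter-surjectivity of $\eta_i$ corresponds to $\pi_i$ being onto $Y_i$ (hence, in the inverse limit, $\pi$ onto $Y$); and the normalization at level $0$ at the distinguished base vertices encodes $\pi(x_{\min})=y_{\min}$, i.e.\ $\pi(x_0)=y_0$.

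The step I expect to be the main obstacle is the \emph{well-definedness} of $\eta_i$, and dually the consistency needed to reconstruct $\pi_i$. A radius-$0$ code at a single fixed level need not respect tower boundaries, so $\pi_i$ applied to a $w$-block may a priori straddle several $Y_i$-blocks, or yield different words at different occurrences of the block. Resolving this is where the freedom in choosing the increasing sequence $(n_i)$ is used: by telescoping $B$ far enough (enlarging each $n_i$) one forces each level-$n_i$ tower of $X$ to be long enough that its $\pi_i$-image is an integer concatenation of complete level-$i$ towers of $Y$, while minimality guarantees that every occurrence of the $w$-block produces the same image word, so that $\eta_i(w)$ is independent of the chosen occurrence. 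Arranging this alignment simultaneously for all $i$, and checking its coherence under the truncations so that the squares of~(\ref{diagram}) genuinely commute, is the technical core of the argument; once it is in place, the remaining bookkeeping is routine and Theorem~\ref{inversesys} closes the equivalence.
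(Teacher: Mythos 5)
The paper does not actually prove this proposition: it is imported verbatim as \cite[Proposition 5.1]{gho}, and the ``proof'' in the source is an empty \verb|\qed|. So there is no in-paper argument to compare yours against; what can be assessed is whether your proposed derivation from Theorem~\ref{inversesys} (itself also quoted from \cite{gho}) is sound. In outline it is, and it is almost certainly the route taken in \cite{gho}: the alphabet of $(X_{n_i},\sigma)$ is the set of floors of the towers of $\mathcal P_{n_i}$, a vertex $w\in W_{n_i}$ is a tower, and a radius-$0$ code $\pi_i$ is exactly a floor-to-floor assignment, so the translation between $\pi_i$ and $\eta_i$ is the block-reading you describe. You also correctly isolate the one nontrivial point, namely that the $\pi_i$-image of a $w$-block must be an exact concatenation of complete $V_i$-tower blocks. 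Two small corrections to how you justify that point. First, well-definedness of $\eta_i(w)$ across occurrences of the $w$-block needs no appeal to minimality: a radius-$0$ code is a function on letters, and the parsing of a point of $Y_i$ into tower blocks is intrinsic to its letters (each letter records which floor of which tower it is), so the image word and its parsing are the same at every occurrence. Second, the alignment itself is not produced by ``telescoping plus minimality'' in the abstract but by the concrete property that for $n_i$ large enough $\mathcal P_{n_i}$ refines $\pi^{-1}(\mathcal Q_i)$ \emph{and} $B(\mathcal P_{n_i})\subseteq \pi^{-1}(B(\mathcal Q_i))$; then the image of the bottom floor of a $w$-tower lies in the base of $\mathcal Q_i$, and since the floor following the top of a $w$-tower is again in $B(\mathcal P_{n_i})\subseteq\pi^{-1}(B(\mathcal Q_i))$, the image word must end at the top of a $\mathcal Q_i$-tower, forcing exact alignment. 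Enlarging $n_i$ and replacing $\pi_i$ by $\pi_i\circ\tilde\tr_{(n_i,n_i']}$ preserves commutativity, so this costs nothing. With those points made precise, the remaining verifications (non-erasing, letter-surjectivity via minimality of $Y$, the base-point normalization, and the translation of the commuting rectangles of Theorem~\ref{inversesys} into diagram~(\ref{diagram})) are routine as you say, and your argument closes.
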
\qed

In light of Proposition \ref{factor1}, when there is a factor map $\pi:(X_B,T_B)\rightarrow(X_C,T_C)$ between Vershik systems on properly ordered Bratteli diagrams, we will say that the factor map $\pi:X_B\rightarrow X_C$ is {\em realized} by $\eta=(\eta_i)_{i\geq 0}$ at levels $(n_i)_{i\geq 0}$. Note that $\eta$ sends the unique minimal path of $X_B$ to the unique minimal path of $X_C$.
Having a morphism $\eta_k:W_{n_k}\rightarrow V_k^*$, for each vertex $w\in W_{n_k}$ and $\eta_k(w)=v_{i_1}\cdots v_{i_m}\in V_k^*$, one can consider 
an ordered set of edges, say $\{g_1, g_2, \cdots, g_m\}$,
 such that for every $1\leq j\leq m$, $s(g_j)=w$ and $r(g_j)=v_{i_j}$, i.e. the source of the edge $g_j$ is $w$ and the range of that is $v_{i_j}$.  See Figures 1-5 as examples.

\section{Topological factoring and  ERS ordered Bratteli diagrams}\label{sec:main}
In this section, we study topological factoring between two Vershik systems on ERS ordered Bratteli diagrams. 

Suppose that $B=(W,E,  \(\xi_k\)_{k\geq 1}  )$ and $C=(V,E',   \(\theta_k\)_{k\geq 1}   )$ are two ERS ordered Bratteli diagrams such that $(X_B, T_B)$ and $(X_C, T_C)$ are Vershik systems.  So  the vertices of the first level of $B$ (resp. $C$)  are connected to $W_0=\{w_0\}$ (resp. $V_o=\{v_0\}$) each one by  $|\xi_1|$ (resp. $|\theta_1|$) edges. 
Suppose that $\pi: (X_B, T_B)\rightarrow (X_C, T_C)$ is a factor map which is realized by the sequence of  morphisms $\eta=\(\eta_i\)_{i\geq 1}$, $\eta_i:W_{n_i}\rightarrow V_i$  satisfying   (\ref{diagram}). 

 We first show that the sequence $\(n_i\)_{i\geq 0}$ can be optimized such that $n_i$ is the smallest natural number for which the heights of the towers in $\mathcal Q_{n_i}$ (associated with $W_{n_i}$) are linear combinations of the heights of the towers in $\mathcal P_i$ (associated with $V_i$). This is established for $n_1$ in Lemma \ref{firstlevel} and for $n_i$ when $i \geq 2$ in Lemma \ref{main}. We mention that for the proof of Lemma \ref{firstlevel}, we will use a technique similar to the {\em symbol splitting} method, originally described in \cite{gps95}, to create pairs of equivalent diagrams.

\begin{lemma}\label{firstlevel}
Suppose that   $(X_B, T_B)$ and $(X_C, T_C)$ are Vershik systems on the ERS properly ordered Bratteli diagrams $B=(W,E,  \(\xi_k\)_{k\geq 1})$ and $C=(V,E'', \(\theta_k\)_{k\geq 1})$. Let $\pi:(X_B, T_B)\rightarrow (X_C, T_C)$ be a factor map realized by $(n_i)_{i\geq 0}$ and  $\eta=\(\eta_i\)_{i\geq 0}$. If
 there exists $1\leq \ell<n_1$ such that $|\xi_{(0,\ell]}|=s|\theta_1|$ for some $s\geq 1$, then there exists a properly ordered Bratteli diagram
 $B'=(W', E',  \(\xi'_k\)_{k\geq 1})$ which is isomorphic to $B$  and whose vertex sets $(W'_n)_{n\geq 1}$ satisfy
  \[\mbox{    $W'_i=W_{\ell+i-1}$  for $i\geq n_1$.}\] Moreover,   there is  a  sequence of morphisms
   $\eta'=\(\eta'_i\)_{i\geq 0}$ such that 
    \[\eta'_0=\eta_0,\ \ \eta'_1:W'_1\rightarrow V_1^{*}   \mbox{ and  $\eta'_i=\eta_i$  for $i\geq 2$.}\]
  In particular, $\eta'$ induces the same factor map $\pi$.  
\end{lemma}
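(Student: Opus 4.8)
The plan is to move the first morphism down to the level $\ell$ furnished by the hypothesis and then telescope $B$ below that level, the only genuine work being to manufacture a morphism $\eta'_1$ on the level-$\ell$ vertices that is compatible with $\eta_1$. First I would record the ERS bookkeeping. In an ERS diagram every vertex of a fixed level has the same height, so $|\xi_{(0,\ell]}(w)|$ is independent of $w\in W_\ell$ and equals the common value $|\xi_{(0,\ell]}|$, while every level-$1$ vertex of $C$ has height $|\theta_1|$; the hypothesis then reads $|\xi_{(0,\ell]}|=s|\theta_1|$. Telescoping $B$ along $0<\ell<\ell+1<\ell+2<\cdots$ yields a diagram with $W'_i=W_{\ell+i-1}$, $\xi'_1=\xi_{(0,\ell]}$ and $\xi'_k=\xi_{\ell+k-1}$ for $k\geq 2$, under which the original level $n_i$ becomes level $n_i-\ell+1$. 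A direct chase of the rectangles in (\ref{diagram}) shows that setting $\eta'_0=\eta_0$ and $\eta'_i=\eta_i$ for $i\geq 2$ makes every square commute precisely when the remaining morphism $\eta'_1\colon W_\ell\to V_1^*$ satisfies the single factorization identity $\eta'_1\circ\xi_{(\ell,n_1]}=\eta_1$. Thus the lemma reduces to producing such an $\eta'_1$.

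To build $\eta'_1$ letterwise, I would fix $w\in W_{n_1}$ and write $\xi_{(\ell,n_1]}(w)=u_1\cdots u_m$ with $u_j\in W_\ell$. Comparing lengths in the $(0,1)$-square gives $|\eta_1(w)|=|\xi_{(0,n_1]}|/|\theta_1|=ms$, so $\eta_1(w)$ decomposes canonically into $m$ consecutive blocks $b_1\cdots b_m$, each of length exactly $s$; it is here that the hypothesis $|\xi_{(0,\ell]}|=s|\theta_1|$ is used, forcing each level-$\ell$ vertex to account for a block of the uniform length $s$. The natural prescription is $\eta'_1(u_j):=b_j$, and with it the factorization identity holds by construction, provided the prescription is consistent.

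The crux, and the reason a bare telescoping does not suffice, is well-definedness: a given $u\in W_\ell$ may occur as some $u_j$ inside the expansions of several level-$n_1$ vertices, or in several positions, and the attached blocks $b_j$ may differ. I would repair this by the symbol-splitting device of \cite{gps95}: replace each $u\in W_\ell$ by the copies $(u,b)$ indexed by the distinct length-$s$ blocks it can carry, and split the edges above level $\ell$ so that each path remembers which copy it traverses. Because the distinguishing data is read off the finite paths terminating at level $n_1$, the splitting merges out by that level; the resulting ordered diagram $B'$ is equivalent to the telescoped $B$ (so the Vershik systems are conjugate), it still satisfies $W'_i=W_{\ell+i-1}$ for $i\geq n_1$, and the higher morphisms $\eta_i$ ($i\geq 2$) are left untouched. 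On $B'$ the assignment $\eta'_1(u,b):=b$ is unambiguous and the factorization identity holds.

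It then remains to run the checks needed to invoke Proposition \ref{factor1}: that the split can be carried out compatibly with the partial order so that $B'$ stays properly ordered and ERS; that $\eta'$ is non-erasing, letter-surjective, satisfies $\eta'_0(w_0)=v_0$ and carries the minimal path to the minimal path; and that every square of (\ref{diagram}) for $(B',C,\eta')$ commutes, all of which follow from the factorization identity together with the unchanged higher data. Proposition \ref{factor1} then guarantees that $\eta'$ realizes the same factor map $\pi$. I expect the genuinely delicate step to be the symbol splitting: arranging it to respect the order, to resolve exactly below level $n_1$, and to preserve the proper ERS structure without disturbing the morphisms $\eta_i$, $i\geq 2$. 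The height identity only secures that the block lengths match; supplying the combinatorial consistency is what the construction must do.
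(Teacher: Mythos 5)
Your proposal is correct and follows essentially the same route as the paper: the paper likewise decomposes $\eta_1(w)$ into length-$s$ blocks indexed by the ordered finite paths from $W_\ell$ to $w\in W_{n_1}$, performs the symbol splitting of \cite{gps95} at levels $\ell$ through $n_1-1$ (with the split vertices indexed by the distinct blocks they carry and the order on the new edges read off the subword positions), lets the splitting resolve at level $n_1$, and defines $\eta'_1$ on the split vertices by the block they carry while leaving $\eta_i$ untouched for $i\geq 2$. The well-definedness obstruction you identify as the crux is exactly the issue the paper's construction is designed to handle, so no further comment is needed.
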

\begin{proof} 
The construction of the new diagram $B'=(W',(\xi'_i)_{i\geq 1})$ proceeds as  follows. We will  use a technique  to define  $W'_i$ and $\xi'_i$ for every $1\leq i\leq  n_1-\ell+1$ and then for every $i> n_1-\ell+1$ we just let $W'_i:=W_{i+\ell-1}$ and $\xi'_i:=\xi_{i+\ell-1}$.

 First consider all finite paths between $W_\ell$ and $W_{n_1}$. 
The maps $\eta_0$ and $\eta_1$ act as follows. 
 \[
\xymatrix{W^*_{0}\ar[d]_{\eta_{0}}
& W_\ell^*\ar[l]_{_{\xi_{(0,\ell]}}}&{W}_{\ell+1}^*\ar[l]_{_{\xi_{\ell+1}}}& \cdots\ar[l]_{\xi_{\ell+2}}& {W}_{n_1}^*\ar[l]_{\xi_{n_1}}\ar[llld]^{\eta_1}\\
\ V^*_{0}
&  V_{1}^*\ar[l]^{\theta_1} &&&
 }
 \] 
Fix a vertex ${w_1}\in W_\ell$; by assumption there are exactly $|\xi_{(0,\ell]}|=s|\theta_1|$ edges between ${w_1}$ and $W_0$. 
Now  for each $w\in {W}_{n_1}$, \eqref{diagram} tells us that $|\eta_{1}(w)|={|\xi_{(0,n_1]}|\over |\theta_1|}$. Thus
$$|\eta_{1}(w)|= {|\xi_{(\ell,n_1]}||\xi_{(0,\ell]}|\over |\theta_1|}={|\xi_{(\ell,n_1]}| (s |\theta_1|)\over |\theta_1|}=s|\xi_{(\ell,n_1]}|=s|\xi_{(\ell,n_1]}(w)|.$$
Consequently,  if there are $k$ ordered finite paths $f_1, \dots , f_k$ from $W_\ell$ whose ranges are the vertex $w\in W_{n_1}$, then we can write 
$\eta_1(w)$ as a concatenation of $k$ words $\eta_{1}^{(f_1)}(w), \dots , \eta_{1}^{(f_k)}(w)$, each of length $s$, i.e., 
\begin{equation}\label{concat}
 \eta_1(w)= \eta_{1}^{(f_1)}(w)\eta_{1}^{(f_2)}(w)\dots \eta_{1}^{(f_k)}(w). 
 \end{equation}
   See Figure \ref{fig1} and Example \ref{example:one} for an example. 
 Next we replace the vertex ${w_1}\in W_\ell$ with a new set  $\widehat{w_1}:=\{w'_{11}, w'_{12}, \ldots, w'_{1k({w_1})}\}$ of $k({w_1})$ vertices, where 
 \begin{equation}\label{cardinal}
 k({w_1})=|\{\eta_1^{(f)}(w):\ w\in {W}_{n_1},\ s(f)={w_1}, r(f)=w\}|.
 \end{equation}
We apply a similar procedure to each vertex in $W_\ell$ as we did for ${w_1}$ and define  
\begin{equation}\label{w1}
W'_1:=\bigcup_{i=1}^ {|W_\ell|} \widehat{w_i}.
\end{equation}
To guide the rest of this proof, Figure \ref{fig1} of Example \ref{example:one}  is continued in  Figure \ref{fig2}. Note that $|W'_1|$ may be strictly  less than the number of  finite paths between $W_\ell$ and ${W}_{n_1}$, as different finite paths $f$ and $f'$ with  the same source in $W_\ell$ and ranges  $w_1, w_2\in {W}_{n_1}$ may satisfy $\eta_1^{(f)}(w_1)=\eta_1^{(f')}(w_2)$, i.e.,  for each $1\leq i\leq |W_\ell|$, we look at $\{\eta_1^{(f)}(w):\ w\in {W}_{n_1},\ s(f)={w_i}, r(f)=w\}$ as a set. \\
For the ordered edge set $E'_1$,  from $W'_0$ to $W'_1$,  we  use the morphism  $\xi_{(0,\ell]}$ to define $\xi'_1: W'_1 \rightarrow W'_0$ by setting
    $$\xi'_1 (w') := \xi_{(0,\ell]}({w})\  \ {\rm for}\  w'\in{\widehat{w}}.$$
Next we define $W'_i$  inductively for $2 \leq i \leq n_1 - \ell$ using a similar procedure to that for $W'_1$. Then to define $\xi'_i$, we use similar argument as for  the definition of $\xi'_1$ but with some modifications with respect to $W'_{i-1}$.\\
 Fix $2 \leq i \leq n_1 - \ell$.  Recall that for  every vertex $w\in W_{n_1}$, $\eta_1(w)$ is a concatenation  of words as in (\ref{concat}).
 Note that each finite path $f_i$  from $W_\ell$ to $W_{n_1}$ passes through $W_{i+\ell-1}$, where each $f_i$ consists  of  a finite path from $W_{\ell}$ to $W_{i+\ell-1}$, followed by another one   from $W_{i+\ell-1}$ to $W_{n_1}$. So for a fixed vertex $w_j\in W_{i+\ell-1}$, $1\leq j\leq |W_{i+\ell-1}|$, every  finite path $g$ from $w_j$ to any vertex $w\in W_{n_1}$, continues $|\xi_{[\ell+1,i+\ell-1]}|$ finite paths from $W_\ell$ to $w_j$. 
 This means that  $\eta_1(w)$ is a concatenation of words  each of length 
$s|\xi_{[\ell+1, i+\ell-1]}|$.
Thus  $w_j$ is linked  to  a collection of words each of the form:
 $$\eta_1^{(g)}(w_j):=\eta_1^{(f_r)}(w)\eta_1^{(f_{r+1})}(w)\cdots\eta_1^{(f_{r+|\xi_{[\ell+1,i+\ell-1}|-1]})}(w),$$
 where $f_r, f_{r+1}, \ldots, f_{r+|\xi_{[\ell+1, i+\ell-1]}|-1}$ are the $|\xi_{[\ell+1,i+\ell-1]}|$ consecutive finite paths from $W_{\ell}$ to  $W_{n_1}$  through $w_j$, all sharing the segment from $w_j$ to  $W_{n_1}$ in $g$.

To define $W'_i$, for each  vertex  $w_j\in W_{i+\ell-1}$, $1\leq j\leq |W_{i+\ell-1}|$,   we consider all the finite paths from $w_j$ to $W_{n_1}$ and then we use the same notation as in (\ref{cardinal}) to define $k(w_j)$ and $\widehat{w_j}$. Consequently, 
\begin{equation}\label{w2}
W'_i:=\bigcup_{j=1}^ {|W_{i+\ell-1}|} \widehat{w_j}.
\end{equation}
Next,   we define $\xi'_i:W'_i\rightarrow W'_{i-1}$. Consider  $w'\in \widehat{w}_j\subset W'_i$ which  is linked to  the  word $\eta_1^{(g)}(w_j)$ . This word is a concatenation of   subwords of length 
\begin{itemize}
\item $s$ if $i=2$, and
\item $s|\xi_{[\ell+1, i+\ell-2]}|$ if $i>2$.
\end{itemize}
By the definition of $W'_{i-1}$, each  of these  subwords is linked to a vertex $w''\in\widehat{w_n}\in W'_{i-1}$, (where $w_n\in W_{i+\ell-2}$). 
So we place  an edge from $w''$ to $w'$ with an ordinal number corresponding to the order of its associated word as a subword of  $\eta_1^{(g)}(w_j)$. Specifically, if $w''$ is associated with the word $\eta_1^{(h)}(u)$ for some vertex $u\in W_{n_1}$ and some finite path  $h$  from $w_n$ to $u$, then 
$$w''\sqsubset \xi'_i(w')\ \Leftrightarrow \ \left (w_n\sqsubset \xi_{i+\ell-1}(w_m)\ {\rm and} \ \eta_1^{(h)}(u)\sqsubset \eta_1^{(g)}(w_j)\right ),$$
where $a\sqsubset b$ means that $a$ is a subword of $b$. Now let $W'_{n_1-\ell+1}:=W_{n_1}$ and define $\xi'_{n_1-\ell+1}: W'_{n_1-\ell+1}\rightarrow W'_{n_1-\ell}$  as follows. For any two vertices  $w\in W'_{n_1-\ell+1}$ and $w'\in \widehat{w_j}\subset W_{n_1-\ell}$  (where $w_j\in W_{n_1-1}$), we have $w'\sqsubset \xi'_{n_1-\ell+1}(w)$ if and only if
$$w_j\sqsubset \xi_{n_1}(w)\ {\rm and}\ \ \eta_1^{(e)}(w)\sqsubset \eta_1(w),\ {\rm for\ some}\ e\ {\rm connecting}\ w_j \ {\rm to}\ w.$$
   The sequence of sets $(W'_i)_{i\geq 0}$  and $(\xi'_i)_{i\geq 1}$, form the graded vertex sets with the partially ordered sets of edges $(E'_i)_{i\geq 1}$ for the diagram $B'$. It is not hard to check that $B'$ is isomorphic to $B$. \\
Note that since $B'$ is a finite modification of $B$, and $B$ is properly ordered, so is $B'$.
 \begin{figure}
\includegraphics[scale=0.2]{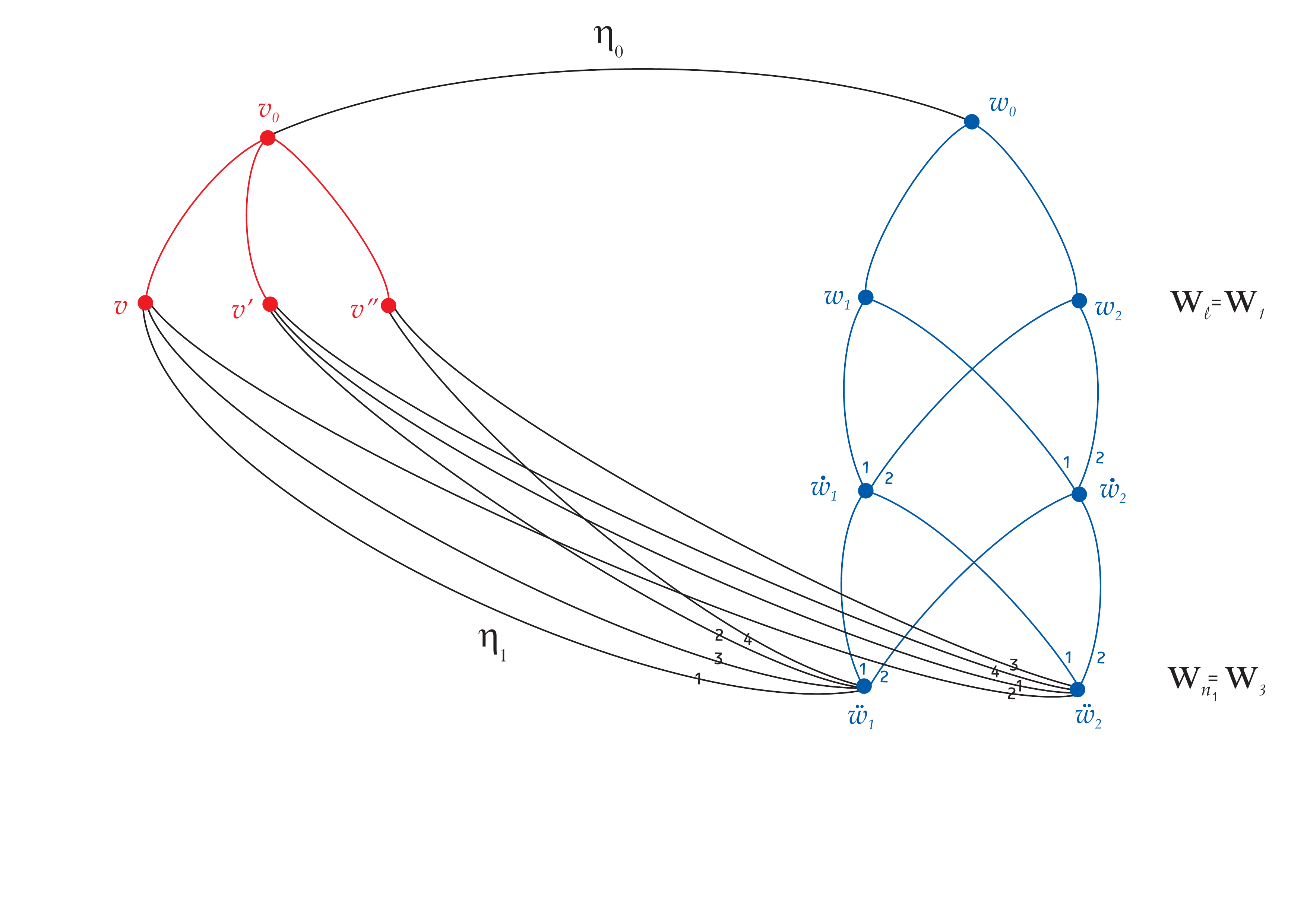}
\caption{This figure is related to the proof of Lemma \ref{firstlevel}. The words linked to the finite paths are described in Example \ref{example:one}. }\label{fig1}
\includegraphics[scale=0.2]{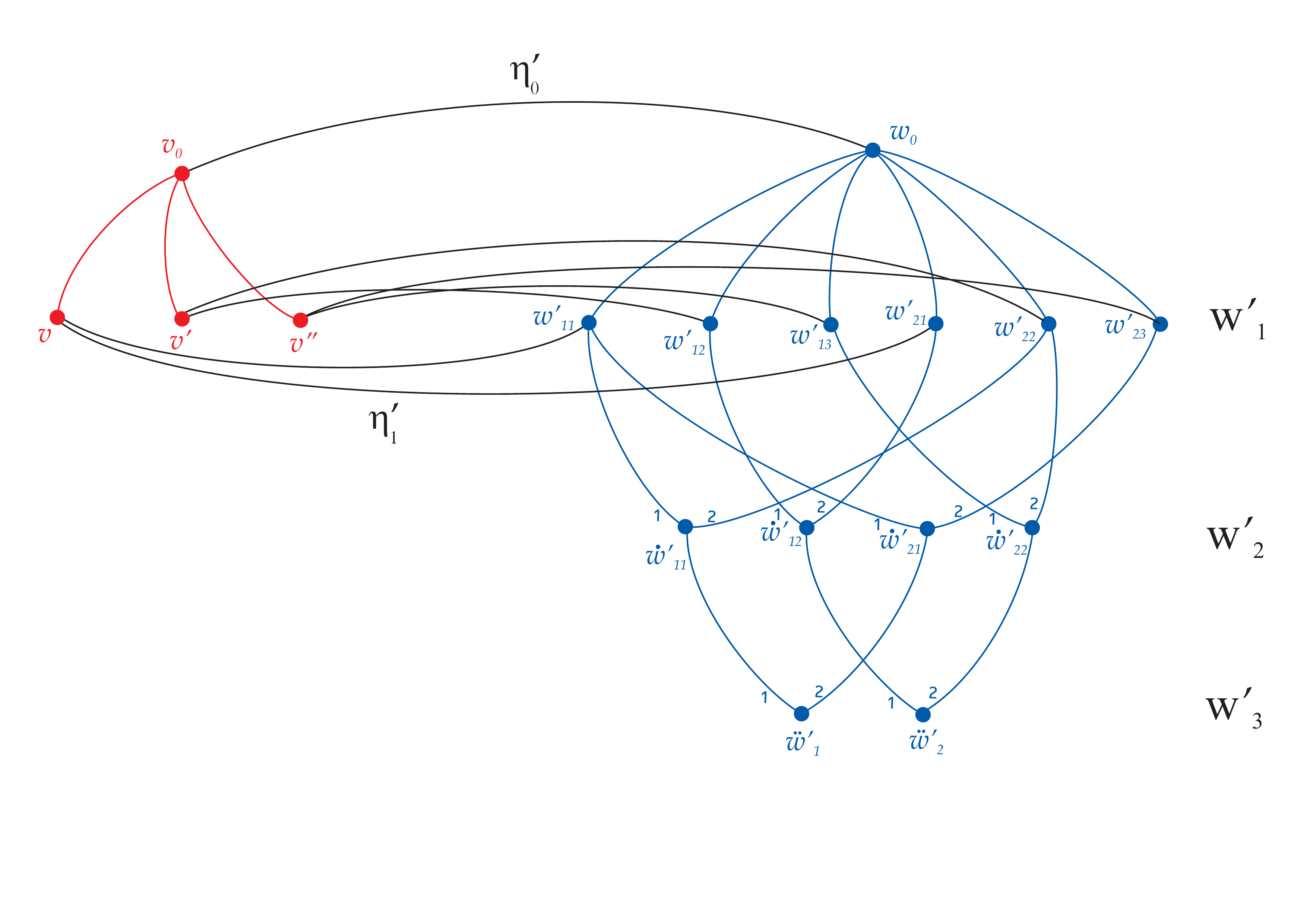}
\caption{This figure is related to the proof of Lemma \ref{firstlevel} for the diagrams in Figure \ref{fig1} described in  Example \ref{example:one}.
}\label{fig2}
\end{figure}
It remains to define
the sequence of morphisms $\eta'$. We start by defining $\eta'_1: W_1'\rightarrow  V_1^*$. Since each vertex $w'$ in $W'_1$ arises from a subword $\eta_1^{(f)}(w')\sqsubset\eta_1(w)$ for some $w\in \dot{W_{n_1}}$ and a finite path $f$ with $r(f)=w$,  we define $$\eta'_1(w'):=\eta_1^{(e)}(w).$$ 
It is clear by definition of $\eta'_1$, that for every $w'\in W'_1$ we have $\theta_1\eta'_1(w')=\eta_0\xi_{(0,\ell]}(w')=\eta'_0\xi'_1(w')$. Finally, for $i\geq 2$, set   $\eta'_i: W'_{n_i-\ell+1}\rightarrow V_i^*$ as $\eta'_i:=\eta_i$.
 \[
\xymatrix{{W'_{0}}^*\ar[d]_{\eta_{0}}
& {W'_1}^*\ar[l]_{_{\xi'_{1}}}\ar[d]_{\eta'_1}&{W'_2}^*\ar[l]_{_{\xi'_{2}}} &\cdots\ar[l]&\ \  \ {W'_{n_2-\ell+1}}^*\ar[l]_{\xi'_{n_2-\ell+1}}\ar[lld]^{\eta'_2}\\
\ V^*_{0}
&  V_{1}^*\ar[l]^{\theta_1} &V_2^*.\ar[l]^{\theta_2} 
 }
 \] 
The factor map defined by $\eta'$ is  the same as $\pi$, see \cite[Proposition 2.9]{gho} for its details. 
  \end{proof}
  \begin{example}\label{example:one}
Consider the Bratteli diagrams in Figure~\ref{fig1}.
We have $\eta_1(\ddot{w}_1)=vv'vv''$ and $\eta_1(\ddot{w}_2)=v'vv''v'$. There are four finite paths $f_1, f_2, f_3, f_4$ with range  $ \ddot{w}_1 $ and four finite paths $f'_1, f'_2, f'_3, f'_4$ with range  $\ddot{w}_2$. Moreover,  we have
 \begin{align*}\eta_1^{(f_1)}(\ddot{w}_1)&=v, \,\, \eta_1^{(f_2)}(\ddot{w}_1)=v', \,\,\eta_1^{(f_3)}(\ddot{w}_1)=v, \,\,\eta_1^{(f_4)}(\ddot{w}_1)=v'', \mbox{ and }\\ \eta_1^{(f'_1)}(\ddot{w}_2)&=v', \,\,\eta_1^{(f'_2)}(\ddot{w}_2)=v, \,\,\eta_1^{(f'_3)}(\ddot{w}_2)=v'',\,\, \eta_1^{(f'_4)}(\ddot{w}_2)=v'.
\end{align*}

Next consider the Bratteli diagrams in Figure~\ref{fig2}.
 The four words linked to $w_1$ are 
 \begin{align*}\eta_1^{(f_1)}(\ddot{w}_1)&=v, \,\,\eta_1^{(f_3)}(\ddot{w}_1)=v,\,\, \eta_1^{(f'_1)}(\ddot{w}_2)=v',\,\,  \eta_1^{(f'_3)}(\ddot{w}_2)=v''.
 \end{align*}
So $\widehat{w_1}=\{w'_{11}, w'_{12}, w'_{13}\}$ because there are only three distinct words linked to $w_1$. Similarly $\widehat{w_2}=\{w'_{21}, w'_{22}, w'_{23}\}$. There are two edges $g_1, g_2$ with source $\dot{w}_1$, namely $g_1$ with range  $\ddot{w}_1$ and $g_2$ with range  $\ddot{w}_2$, and  we have two different words linked to $\dot{w}_1$: $\eta_1^{g_1}(\dot{w}_1)=vv', \eta_1^{g_2}(\dot{w}_1)=v'v$. So $\widehat{\dot{w_1}}=\{\dot{w}'_{11}, \dot{w}'_{12}\}$. Similarly, $\dot{w}_2$ is linked to $\eta_1^{(g'_1)}(\dot{w}_2)$ and $\eta_1^{(g'_1)}(\dot{w}_2)$ which yields  $\widehat{{\dot{w_2}}}=\{\dot{w}'_{21}, \dot{w}'_{22}\}$. 
 \end{example}

\begin{remark}\label{leftmost}
 Suppose that in $B=(W,E,  \(\xi_k\)_{k\geq 1})$, each minimal  edge  at level $W_i$ has the same source  for every $i\geq 1$.
 Note that  after performing the symbol splitting procedure in Lemma~\ref{firstlevel},   the same is true for $B'$, but possibly only for  each level $W'_i$, $i\geq n_1+1$. 
\end{remark}

\begin{remark}\label{single}
Suppose that   diagram $B$ (resp. $C$)  has the property that for any vertex $w\in W_1$ (resp. $v\in V_1$) there is a single edge from $W_0$ (resp. $V_0$) terminating at $w$ (resp. $v$).  Assume that we have $n_1>1$ and  $\eta_1:W_{n_1}\rightarrow V_1$.  Using the notation in the proof of Lemma \ref{firstlevel}, in such cases we have $\ell=1$ and so $W_\ell=W_1$. Then, when the  symbol splitting is implemented on $W_1$,
 the morphism  $\eta'_1: {W'_1}^*\rightarrow V_1^*$, is  letter-to-letter.
\end{remark}
\medskip

Let us recall a result from \cite{GH} that will be used in the proof of Lemma \ref{main}.
\begin{lemma}\cite[Lemma 3.1]{GH}\label{packing1}
Let $C=(V,E,(\theta_n)_{n\geq 1})$ be an ordered Bratteli diagram for which there exist some $i\geq 1$ 
and a set of words $W\subseteq V_{i-1}^*$ such that
\begin{equation}\label{equ_packing}
\theta_{i}(v)\in W^*\ \ \text{for every}\
v\in V_{i}.
\end{equation}
Suppose  that $W$ is a minimal subset
of $V_{i-1}^*$
with respect to the inclusion
relation satisfying \eqref{equ_packing}.
Then there is an 
ordered Bratteli diagram $C'$ isomorphic to $C$, which is
constructed from $C$ 
by adding a new set of vertices, say $\mathcal V$, between   levels
 $V_{i-1}$ and $V_i$ such that $|\mathcal V|=|W|$.
\end{lemma}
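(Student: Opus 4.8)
The plan is to realize this lemma as an instance of the operation inverse to telescoping: I would \emph{microscope} the single level $V_i$ by inserting one intermediate layer of vertices indexed by $W$, chosen so that the morphism $\theta_i$ factors through this new layer. Concretely, let $\mathcal V=\{u_w : w\in W\}$ be a set in bijection with $W$, placed between $V_{i-1}$ and $V_i$, so that immediately $|\mathcal V|=|W|$; all other levels of $C$ are left untouched (and relabelled upward above level $i$).

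The heart of the construction is to split $\theta_i=\theta_i^{\mathrm{up}}\circ\theta_i^{\mathrm{down}}$ into two morphisms across the new layer. First I would define the upper morphism $\theta_i^{\mathrm{up}}\colon\mathcal V^{*}\to V_{i-1}^{*}$ by $\theta_i^{\mathrm{up}}(u_w):=w$; reading off sources in increasing edge order, this prescribes, for $w=a_1\cdots a_m$, the ordered edge set into $u_w$ with sources $a_1<\cdots<a_m$ in $V_{i-1}$. For the lower morphism $\theta_i^{\mathrm{down}}\colon V_i^{*}\to\mathcal V^{*}$ I would fix, once and for all, a factorization $\theta_i(v)=w^{(v)}_1\cdots w^{(v)}_{k_v}$ of each $\theta_i(v)$ into blocks of $W$ — such a factorization exists precisely because $\theta_i(v)\in W^{*}$ — and set $\theta_i^{\mathrm{down}}(v):=u_{w^{(v)}_1}\cdots u_{w^{(v)}_{k_v}}$, with the induced order on the edges from $\mathcal V$ into $v$. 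By construction $\theta_i^{\mathrm{up}}(\theta_i^{\mathrm{down}}(v))=w^{(v)}_1\cdots w^{(v)}_{k_v}=\theta_i(v)$, and since the lexicographic composition of the two new orders on a path $V_{i-1}\to u_w\to v$ reproduces the letter order of $\theta_i(v)$, telescoping the resulting diagram $C'$ across the inserted level returns exactly $C$. This is the sense in which $C'$ is equivalent (order-isomorphic after telescoping) to $C$.

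The one point that genuinely needs minimality — and the step I expect to be the crux — is verifying that $C'$ is a \emph{bona fide} Bratteli diagram realizing the full count $|\mathcal V|=|W|$, i.e. that every new vertex $u_w$ has a nonempty source-preimage $s^{-1}(u_w)$ (equivalently, that $u_w$ appears in some $\theta_i^{\mathrm{down}}(v)$). Here I would argue contrapositively: if some $w\in W$ were used in no chosen factorization, then one could rewrite every $\theta_i(v)$ over $W\setminus\{w\}$, so that $\theta_i(v)\in(W\setminus\{w\})^{*}$ for all $v$; minimality of $W$ with respect to inclusion forbids this, since for each $w$ there must be a vertex $v$ whose image $\theta_i(v)$ cannot be written without $w$, forcing $w$ — hence $u_w$ — to appear in \emph{any} admissible factorization of that $\theta_i(v)$, in particular the one we fixed. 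This is exactly what rules out redundant inserted vertices and pins the new layer at cardinality precisely $|W|$.

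The remaining verifications I would only sketch, as they are routine: that $r^{-1}(u_w)\neq\emptyset$ because each $w\in W$ is a nonempty word (the empty word being discarded by minimality); that the two new orderings compose to the original ordering on paths, so the Vershik dynamics is unchanged; and that, although factorizations into $W$-blocks need not be unique, fixing one selection per vertex already yields a valid $C'$, different selections producing different but equally admissible diagrams. Since the lemma asserts only existence, any single choice suffices.
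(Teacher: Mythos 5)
The paper does not prove this lemma itself; it imports it verbatim from \cite[Lemma 3.1]{GH}, so there is no in-paper proof to diverge from. Your reconstruction is correct and is the standard ``microscoping'' argument (the same symbol-splitting idea the paper uses in Lemma~\ref{firstlevel}): you factor $\theta_i$ through a layer indexed by $W$, and you place the minimality hypothesis exactly where it is needed, namely to guarantee that every inserted vertex $u_w$ occurs in some chosen factorization, so that $s^{-1}(u_w)\neq\emptyset$ and the new level has cardinality exactly $|W|$.
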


We apply a restricted form of  Lemma \ref{packing1} to ERS ordered Bratteli diagrams. That is, having the above assumptions for the ERS ordered Bratteli diagram $C=(V,E,\(\theta_i\)_{i\geq 1})$, for every $i\geq 2$ we consider $$\mathcal V_i=\{\theta_i(v):\  v\in V_i\}.$$ In other words, we add a level between $V_i$ and $V_{i-1}$ if there exist  $v, v'\in V_i$, $v\neq v'$ such  that $\theta_i(v)=\theta_i(v')$. Therefore, for every $i\geq 2$, if $\mathcal V_i$ is a nonempty set we have $2\leq |\mathcal V_i|<|V_i|$. Note that the lower bound $2$ is true for all sufficiently large $i$ as we only concerned with expansive systems, and if $|\mathcal V_i|=1$ infinitely often, then the systems would be conjugate to an odometer, which is not expansive.

 Now assume that $B=(W,E,\(\xi_i\)_{i\geq 1})$ and  $C=(V,E',\(\theta_i\)_{i\geq 1})$ are two ERS properly ordered Bratteli diagrams satisfying  diagram (\ref{diagram}) for some $(\eta_i)_{i\geq 0}$.  Then one can also adjust  the morphisms $\eta_i$ for every $i\geq 2$ so that there is an analogous diagram (\ref{diagram}) for some  $(\eta_i')_{i\geq 0}$  between $B$ and $C'$, where $C'$ is as in Lemma~\ref{packing1}. In fact, when  $\mathcal V_i\neq\emptyset$ it means that there are  vertices $v, v'\in V_i$ that $\theta_i(v)=\theta_i(v')$. Therefore,  there is a vertex $\nu\in\mathcal V_i$ which is the representative of $v, v'$.  So for every $w\in W_{n_i}$, all those letters in $\eta_i(w)$ that are equal to   $v$ or $v'$ are replaced by $\nu$. 
In this way we get $\eta'_i: W_{n_i}\rightarrow \mathcal V_i$. Thus, we can assume that if $B$ and $C=(V,E',\(\theta_i\)_{i\geq 1})$ satisfy diagram (\ref{diagram}), then for each $v, v'\in V_i$ we have $\theta_i(v)\neq\theta_i(v')$.  
\medskip

The next lemma demonstrates how to optimize $n_i$ for $i \geq 2$ in relation to the sequence $(\eta_i)_{i \geq 0}$, where $\eta_i: W_{n_i} \rightarrow V_i$. The proof of this lemma shares some similarities with the proof of Lemma \ref{firstlevel}, but a key distinction is that for $i \geq 2$, the morphism $\eta_{i-1}$ does not map between two singleton sets. Due to this fact and the arguments following Lemma \ref{packing1}, optimizing $n_i$ for $i \geq 2$ does not require constructing an ordered Bratteli diagram $B'$ equivalent to $B$; instead, only $\eta_i$ for $i \geq 2$ is pushed upward.

\begin{lemma}\label{main}
Let $B=(W,E,(\xi_i)_{i\geq 1})$ and $C=(V,E',(\theta_i)_{i\geq 1})$ be two ERS properly ordered Bratteli diagrams and let $(X_B,T_B)$ and $(X_C,T_C)$ be the corresponding Vershik systems.   Let $\pi:(X_B, T_B)\rightarrow (X_C, T_C)$ be a factor map realized by $(n_i)_{i\geq 0}$  and  $\eta=\(\eta_i\)_{i\geq 0}$. For each $i\geq 1$, we can take $n_i$ to be the smallest positive integer (greater than $n_{i-1}$) such that $|\theta_{(0,i]}|$ divides $|\xi_{(0,n_i]}|$; specifically, there exists $r_i$ such that for all $w\in W_{n_i}$,  $|\xi_{(0,n_i]}(w)| = r_i|\theta_{(0,i]}|.$
\end{lemma}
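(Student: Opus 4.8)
The plan is to argue by induction on $i$, establishing together the two assertions of the lemma: the \emph{length identity} that forces the divisibility, and the \emph{optimization} that replaces $n_i$ by the least admissible level. For the length identity I would feed a single vertex $w\in W_{n_i}$ into the commuting square \eqref{diagram} with indices $(0,i)$, treating $\eta_0$ as the length-preserving root identification $w_0\mapsto v_0$. Because both diagrams are ERS, applying $\theta_{(0,i]}$ multiplies word-length by the constant $|\theta_{(0,i]}|$, while $\xi_{(0,n_i]}(w)$ has the constant length $|\xi_{(0,n_i]}|$; comparing the two sides of the square yields $|\eta_i(w)|\,|\theta_{(0,i]}|=|\xi_{(0,n_i]}|$. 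This simultaneously shows $|\theta_{(0,i]}|$ divides $|\xi_{(0,n_i]}|$, that the quotient $r_i:=|\eta_i(w)|$ is independent of $w$, and that the level $n_i$ coming from any given realization is admissible. Hence the least admissible $m_i>n_{i-1}$ exists and satisfies $m_i\le n_i$, and what remains is to push $\eta_i$ down from level $n_i$ to level $m_i$.

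For the optimization step I would keep $\eta_0,\dots,\eta_{i-1}$ and the already-optimized levels $n_0,\dots,n_{i-1}=m_{i-1}$ fixed, and seek a morphism $\eta_i':W_{m_i}^*\to V_i^*$ with $\eta_i=\eta_i'\circ\xi_{(m_i,n_i]}$; once this factorization holds, commutation with the level above is automatic, and commutation with level $i-1$ reduces to $\theta_i\circ\eta_i'=\eta_{i-1}\circ\xi_{(n_{i-1},m_i]}$. The device is to set $B(u):=\eta_{i-1}\bigl(\xi_{(n_{i-1},m_i]}(u)\bigr)$ for $u\in W_{m_i}$, a word depending only on the \emph{vertex} $u$, and to observe, by applying $\eta_{i-1}\circ\xi_{(n_{i-1},\,\cdot\,]}$ to the factorization $\xi_{(m_i,n_i]}(w)=u_1\cdots u_k$, that $\theta_i(\eta_i(w))=B(u_1)\cdots B(u_k)$. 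The word $\theta_i(\eta_i(w))$ also splits into the uniform length-$|\theta_i|$ blocks $\theta_i(a)$ indexed by the letters $a$ of $\eta_i(w)$; the crucial point is that, because $m_i$ was chosen so that $|\theta_{(0,i]}|\mid|\xi_{(0,m_i]}|$, each $B(u_j)$ has length exactly $r_i|\theta_i|$, a multiple of $|\theta_i|$. Thus the $B$-block boundaries fall on $\theta_i$-segment boundaries, so each $B(u_j)$ equals $\theta_i(c_j)$, where $c_j$ is the corresponding length-$r_i$ block of $\eta_i(w)$.

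It then remains to check that $c_j$ is a function of the vertex $u_j$ alone. Here I would invoke the reduction recorded in the remark following Lemma~\ref{packing1} (and justified by Lemma~\ref{packing1}) that we may assume $\theta_i$ is injective on the vertices of $V_i$; since $\theta_i$ has constant length $|\theta_i|$ in an ERS diagram, this upgrades to injectivity on words of equal length, so $c_j$ is the \emph{unique} length-$r_i$ word with $\theta_i(c_j)=B(u_j)$ and hence depends only on $u_j$. Setting $\eta_i'(u):=c$ for the common block attached to every occurrence of $u$ gives a well-defined, non-erasing morphism; letter-surjectivity is inherited from $\eta_i$ through $\eta_i=\eta_i'\circ\xi_{(m_i,n_i]}$, and the required commutations hold by construction. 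I expect the main obstacle to be precisely this well-definedness: a single vertex $u$ may occur in $\xi_{(m_i,n_i]}(w)$ through many different paths and for many different $w$, and one must ensure that all these occurrences carry the \emph{same} block $c$. This is exactly what separates the case $i\ge 2$ from Lemma~\ref{firstlevel}: when $i\ge 2$ the map $\eta_{i-1}$ is not between singletons, so $B(u)$ genuinely records the vertex $u$ and pins down $c$ uniquely; by contrast, for $i=1$ the word $B(u)$ lives over the single root symbol and carries no such information, which is why Lemma~\ref{firstlevel} must split vertices and rebuild $B$, whereas here only $\eta_i$ is pushed down and the diagram $B$ is left untouched.
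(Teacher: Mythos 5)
Your proposal is correct and follows essentially the same route as the paper's proof: the divisibility comes from the length count in the commuting square, and the push-up of $\eta_i$ to the least admissible level rests on the same key claim --- that the block of $\eta_i(w)$ sitting over an occurrence of $u\in W_{m_i}$ is determined by $u$ alone, obtained from $\theta_i(c)=\eta_{i-1}\bigl(\xi_{(n_{i-1},m_i]}(u)\bigr)$ together with the injectivity of $\theta_i$ on letters (the reduction after Lemma~\ref{packing1}) and the ERS constant-length property, which is exactly the paper's Claim. The only blemish is notational: the common block length is $|\xi_{(0,m_i]}|/|\theta_{(0,i]}|$ rather than $r_i=|\xi_{(0,n_i]}|/|\theta_{(0,i]}|$, which does not affect the argument.
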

\begin{proof}
 By Lemma \ref{firstlevel}, for  $\eta_1:W_{n_1}\rightarrow V_1$,
$n_1$ is the least integer  with the property   $|\xi_{(0,n_1]}|=r_1|\theta_{1}|$, $r_1\in\mathbb N$. 
Assume that $i\geq 1$; then by \eqref{diagram}, we have 
$$ |\xi_{(0,n_{i}]}|=r_i|\theta_{(0,i]} |,\ \ \ |\xi_{(0,n_{i+1}]}|=r_{i+1}|\theta_{(0,i+1]} |,\ \ {\rm for\ some}\  r_i, r_{i+1}\in\mathbb N$$
 and $$\theta_{i+1}\circ \eta_{i+1}=\eta_i\circ\xi_{(n_i,n_{i+1}]}.$$
Suppose that there exists some $\ell$, $n_i<\ell<n_{i+1}$ such that $$\exists\ t_\ell\in\mathbb N;\ \ 
 |\xi_{(0,\ell]}|=t_\ell|\theta_{(0,i+1]} |.$$   So  we have the following commutative diagram: 
\begin{equation}\label{dig4}
\xymatrix{W_{n_i}^*\ar[d]_{\eta_{i}}
& W_{\ell}^*\ar[l]_{_{\xi_{(n_i,\ell]}} }&W_{n_{i+1}}^*\ar[l]_{\xi_{(\ell,n_{i+1}]}}\ar[ld]^{\eta_{i+1}} \\
\ V_{i}^*
& V_{i+1}^*\ar[l]^{\theta_{i+1}} &  \   
 }
\end{equation}
in which for every $w\in W_{n_{i+1}}$,
\begin{equation}\label{eq1}
\eta_i\circ\xi_{(n_i,\ell]}\circ\xi_{(\ell,n_{i+1}]}(w)=\theta_{i+1}\circ\eta_{i+1}(w).
\end{equation}
Note that
for every $w\in W_{n_{i+1}}:$ $$\ r_{i+1}|\theta_{(0,i+1]}|=|\xi_{(\ell,n_{i+1}]}(w)| |\xi_{(0,\ell]}|=|\xi_{(\ell,n_{i+1}]}(w)| (t_\ell |\theta_{(0,i+1]}|),$$ 
which implies that
 $$r_{i+1}=t_\ell |\xi_{(\ell,n_{i+1}]}(w)|.$$
So for each fixed vertex $w'\in W_\ell$ and each path $e$ with $s(e)=w', r(e)=w$, there exists a subword of $\eta_{i+1}(w)\in V_{i+1}^*$ of length $t_{\ell}$ linked to $e$. As in  the proof of Lemma \ref{firstlevel}, we refer to this subword as $\eta^{(e)}_{i+1}(w')$. \\
{\bf Claim.}
 For a fixed vertex $w'\in W_\ell$, 
$$|\{\eta_{i+1}^{(e)}(w'):\  e\ {\rm is\ a\ path\ with}\ s(e)=w', r(e)\in W_{n_{i+1}}\}|=1.$$
 To prove the claim, we need to show that if  $w'\in W_\ell$ then for any two paths $e_1$ and $e_2$ with $s(e_1)=s(e_2)=w'$,  $r(e_1)=w_1$, and $r(e_2)=w_2$, with $w_1, w_2\in W_{n_1}$, we have
 \begin{equation}\label{eq2}
 \eta_{i+1}^{(e_1)}(w')=\eta_{i+1}^{(e_2)}(w').
 \end{equation}
To see this,  note that by (\ref{eq1}), $$\theta_{i+1}\eta_{i+1}^{(e_1)}(w_1)=\eta_i\xi_{(n_i,\ell]}(w')=\theta_{i+1}\eta_{i+1}^{(e_2)}(w_2).$$
Moreover, $\eta_{i+1}^{(e_j)}(w_j)\in V_{i+1}^*, j=1,2$. Now using the fact that our diagrams have the ERS property, and by the arguments following Lemma~\ref{packing1}, the equality in (\ref{eq2}) is satisfied and this proves the claim. See Figure 3 for an example. 
 \begin{figure}
\includegraphics[scale=0.4]{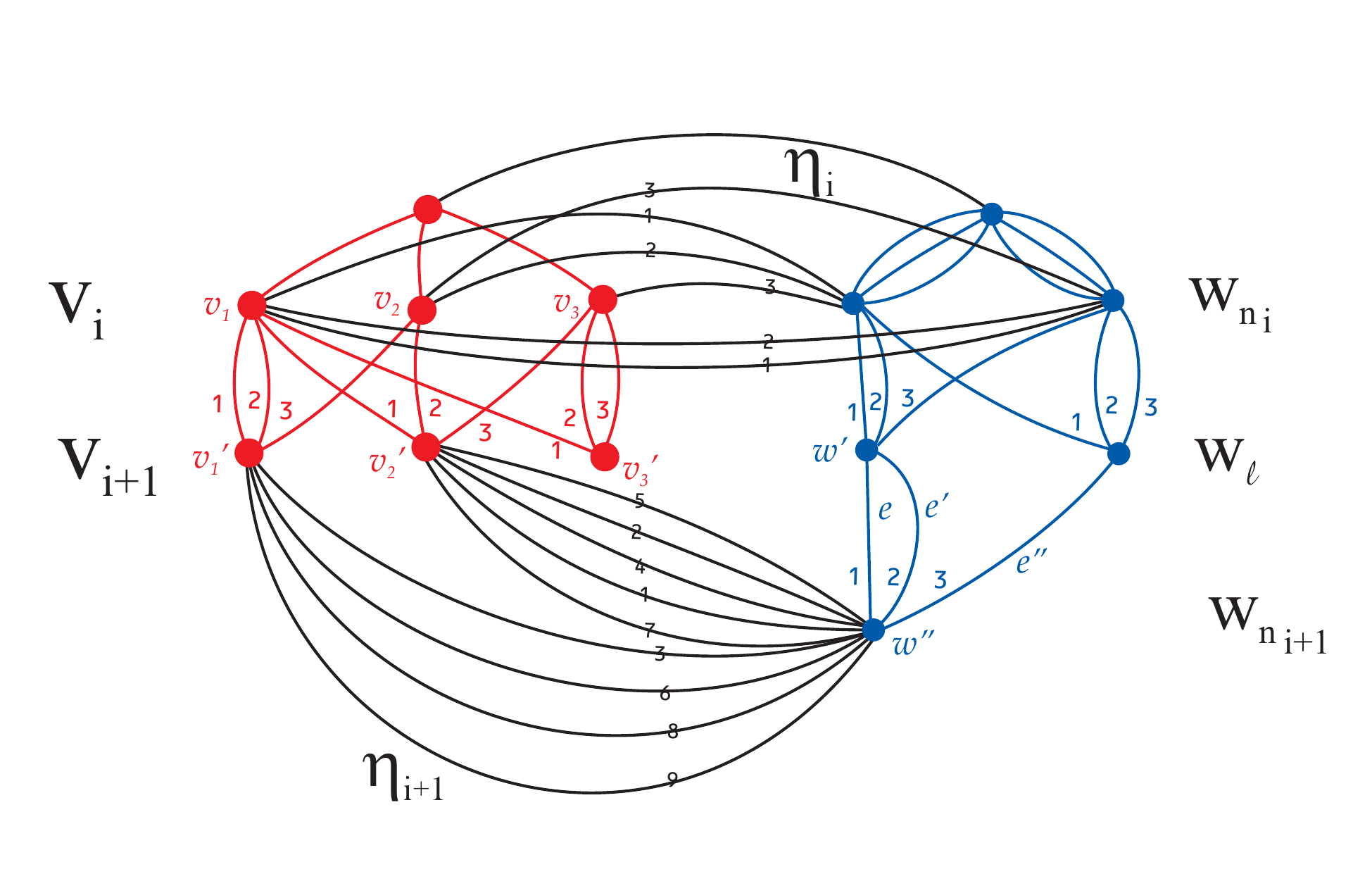}
\caption{$\eta_{i+1}(w'')=v'_2v'_2v'_1v'_2v'_2v'_1v'_2v'_1v'_1$, $\theta_{i+1}(v'_1)=v_1v_1v_2$, and $\theta_{i+1}(v'_2)=v_1v_2v_3$. Moreover, $\eta_{i+1}^{(e)}(w'')=\eta_{i+1}^{(e')}(w'')=v'_2v'_2v'_1, \eta_{i+1}^{(e'')}(w'')=v'_2v'_1v'_1$.}\label{fig3}
\includegraphics[scale=0.4]{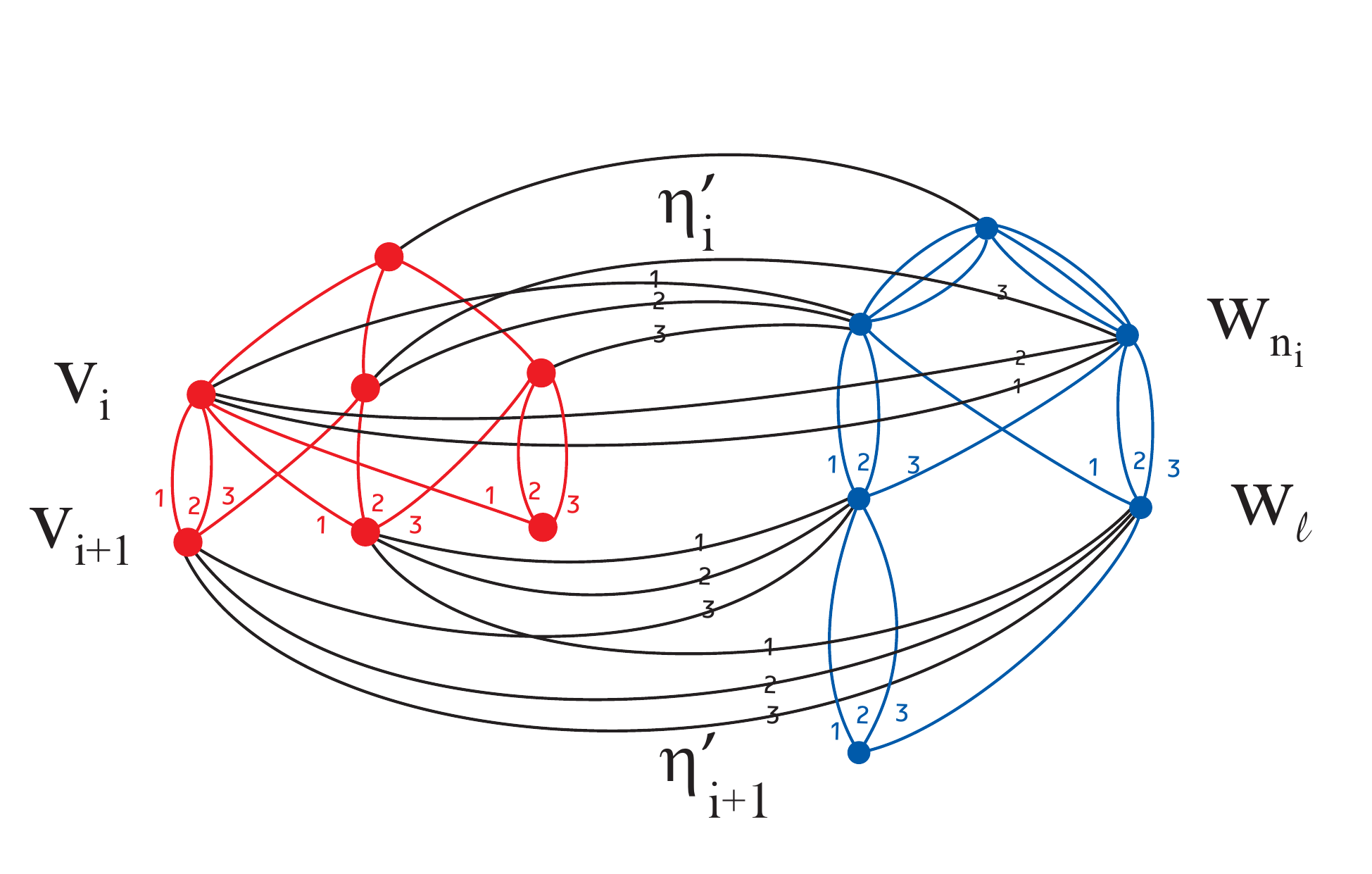}
\caption{After using the claim to push $\eta_{i+1}:W_{n_{i+1}}\rightarrow V_{i+1}$ upward, we get  $\eta'_{i+1}:W_\ell\rightarrow V_{i+1}$.}\label{fig4}
\end{figure}
Consequently, for  each vertex $w'\in W_\ell$  there exists a specific word $\alpha(w'):=\eta_{i+1}^{(e)}(w')\in V_{i+1}^*$ which appears as  subword of $\eta_{i+1}(w)$ for every $w\in W_{n_{i+1}}$ and $e$ with $s(e)=w', r(e)=w$.  Now to define 
 $\(\eta'_j\)_{j\geq 1}$ we just let  
 \begin{itemize}
 \item  $\eta'_j=\eta_j$, if $j\neq i+1$, and
 \item for  $j=i+1$, $\eta'_{j}:W_\ell\rightarrow V_{i+1}$ is defined by
 $\eta'_{j}(w')=\alpha(w')$ for every $w'\in W_\ell$.
 \end{itemize}
 The equation of commutativity  $\eta'_j\circ\xi_{(n_j,n_{j+1}]}=\theta\circ \eta'_{j+1}$  is clearly satisfied for all  $j$  by the definition of $\eta'_{i+1}$.  See Figure 4 as an example which is in continuation of Figure 3.
 \begin{figure}
\includegraphics[scale=0.4]{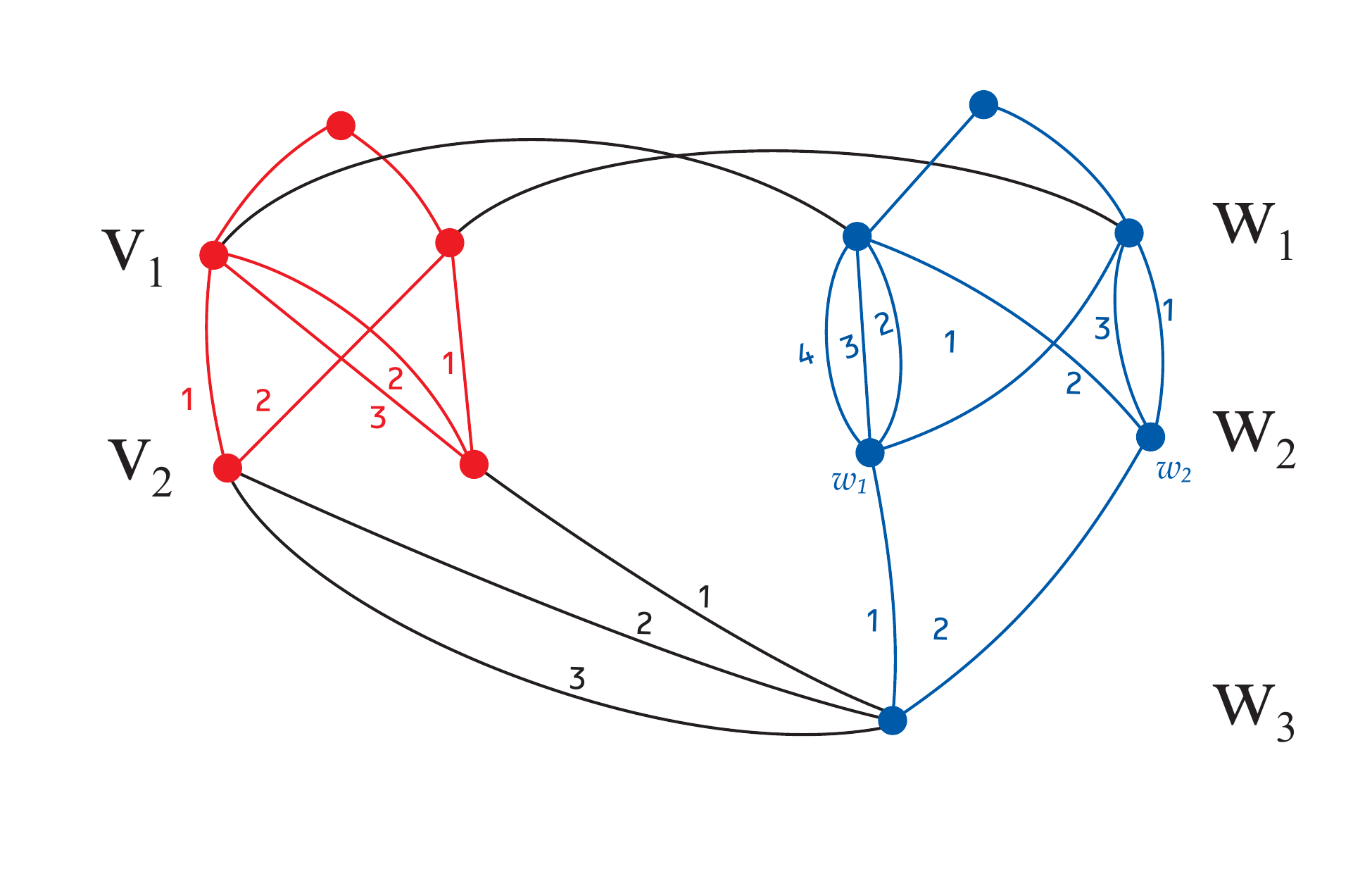}
\caption{This Figure shows the counter-example mentioned in Remark \ref{counter}.} \label{fig5}
\end{figure}
 \end{proof}
\begin{remark}\label{counter}
The condition 
$$\forall\ w\in W_{n_k},\ \ |\xi_{(0,n_i]}(w)|=r_i|\theta_{(0,i]} |$$
used in  Lemma \ref{main} for topological factorings of Vershik systems on ERS ordered Bratteli diagrams, cannot be generalized to topological factoring of Vershik systems on non-ERS Bratteli diagrams. In fact, a natural generalization of this condition for non-ERS Bratteli diagrams can be formulated as 
$$\forall \ w\in W_{n_k},\ \exists v_{i_1},\ldots, v_{i_s},\ {\rm so\ that}\  |\xi_{(0,n_i]}(w)|=\sum_{n=1}^sc_i|\theta_{(0,i]}(v_n)|,\ c_i\in\N.$$
 The  example in Figure \ref{fig5} shows that this generalized form of the condition in Lemma \ref{main} is not sufficient for pushing up the morphism $\eta_2: W_3\rightarrow V_2$ to be defined on $W_2$. This is while for both vertices $w_i\in W_2$, $i=1,2$, $|\xi_2(w_i)|$ are linear combinations of $|\theta_2(v_i)|$, $i=1,2$ 
 \end{remark}
 Let $\pi:(X_B, T_B)\rightarrow (X_C, S_C)$ be a factor map between two Vershik systems on ERS ordered Bratteli diagrams $B$ and $C$ respectively, realized by $(n_i)_{i\geq 0}$  and  $\eta=\(\eta_i\)_{i\geq 0}$. We will call $(n_i)_{i\geq 0}$ {\em optimal} if it satisfies the conditions of Lemma~\ref{main}, i.e.,  for each $i\geq 1$, $n_i$ is the least integer greater than $n_{i-1}$  such that $|\theta_{(0,i]}|$ divides $|\xi_{(0,n_i]}|$. 
\begin{lemma}\label{proper}
Let $B=(W,E,(\xi_i)_{i\geq 1})$ and $C=(V,E',(\theta_i)_{i\geq 1})$ be two ERS properly ordered Bratteli diagrams. Assume that $(X_B,T_B)$ and $(X_C,T_C)$ are the corresponding  Vershik systems and they are expansive. Suppose that  $(|\xi_{(0,i]}|)_{i\geq 1}$ (resp. $(|\theta_{(0,i]}|)_{i\geq 1}$) as constructive period structure of $x_{\min}$ (resp.  $y_{\min}$).   If $\pi:(X_B,T_B)\rightarrow (X_C,T_C)$  is a  factor map with $\pi(x_{\min})=y_{\min}$ that is  realized by an optimal $(n_i)_{i\geq 0}$ and $\(\eta_i\)_{i\geq 0}$, then there exists $i_0$ such that 
for every $ i\geq {i_0},\ \ n_{i+1}=n_i+1$.
\end{lemma}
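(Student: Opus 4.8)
The plan is to reduce the lemma to a single divisibility between consecutive terms of the two constructive period structures, and then to prove that divisibility by transferring essential periods across the factor map. Throughout write $p_i := |\xi_{(0,i]}|$ and $q_i := |\theta_{(0,i]}|$ for the constructive period structures of $x := x_{\min}$ and $y := y_{\min}$, so that $\pi(x)=y$. By Lemma~\ref{main} the sequence $(n_i)$ is strictly increasing and $n_i$ is the least index exceeding $n_{i-1}$ with $q_i \mid p_{n_i}$; in particular $q_i \mid p_{n_i}$ for all $i$, and $n_{i+1} > n_i$. Hence to prove the lemma it suffices to find $i_0$ such that for every $i\geq i_0$
\begin{equation}\label{eq:divis}
q_{i+1} \mid p_{n_i+1},
\end{equation}
because \eqref{eq:divis} exhibits $n_i+1$ as an admissible value exceeding $n_i$ for the optimal $n_{i+1}$, forcing $n_{i+1}=n_i+1$.

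Since $(X_B,T_B)$ and $(X_C,T_C)$ are expansive Cantor systems they are conjugate to subshifts, so $\pi$ is a sliding block code of some fixed radius $R$ by Curtis--Hedlund--Lyndon \cite{Lind-Marcus}; fix a block map $\Pi$ with $y_m=\Pi(x_{[m-R,m+R]})$. The mechanism I will use is a transfer of periods: if $[m-R,m+R]\subseteq {\rm Per}_P(x)$ for some $P$, then for every $m'\equiv m \pmod P$ the windows $x_{[m-R,m+R]}$ and $x_{[m'-R,m'+R]}$ agree, so $y_{m'}=y_m$ and $m\in {\rm Per}_P(y)$. I apply this with $P=p_{n_i+1}$: because $(p_j)$ is constructive, Lemma~\ref{autol} gives that the essential period of $x_{[0,p_{n_i}-1]}$ equals $p_{n_i+1}$, whence $[0,p_{n_i}-1]\subseteq {\rm Per}_{p_{n_i+1}}(x)$.

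Now $q_{i+1}$ is, by constructiveness of $(q_i)$, the essential period of $y_{[0,q_i-1]}$, that is the least common multiple of the least periods of the positions $m\in[0,q_i-1]$; recall $q_i\mid p_{n_i}$, so $[0,q_i-1]\subseteq[0,p_{n_i}-1]$. For an interior index $m$, i.e.\ one with $[m-R,m+R]\subseteq[0,p_{n_i}-1]$, the transfer step yields $m\in {\rm Per}_{p_{n_i+1}}(y)$, so the least period of $y$ at $m$ divides $p_{n_i+1}$. It remains to control the finitely many boundary indices. The left boundary $\{0,\dots,R-1\}$ is fixed and independent of $i$; each such position has a fixed least period in $y$, which divides $q_J$ for some fixed $J$, and since $n_i\to\infty$ we have $q_J\mid p_{n_J}\mid p_{n_i+1}$ once $n_i\geq n_J$. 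For the right boundary one uses $q_i\mid p_{n_i}$: when $p_{n_i}/q_i\geq 2$ the window of every $m\leq q_i-1$ still lies in $[0,p_{n_i}-1]$ for large $i$ (as $R<q_i$), so these positions are handled as interior ones. Taking the least common multiple over all $m\in[0,q_i-1]$ then gives \eqref{eq:divis}, and with the reduction above this completes the argument for all $i\geq i_0$.

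The main obstacle is obtaining the \emph{sharp} divisibility \eqref{eq:divis}, with $p_{n_i+1}$ rather than some $p_{n_i+c}$, since this is precisely what forces the gap $n_{i+1}-n_i$ down to $1$ instead of merely bounding it. The delicate point is the interaction of the fixed code radius $R$ with the moving right-hand boundary of the block $y_{[0,q_i-1]}$, which is most acute in the degenerate case $p_{n_i}=q_i$, where the preimage window of $y_{q_i-1}$ overhangs $[0,p_{n_i}-1]$. Here the argument must genuinely use that $(q_i)$ is constructive, so that the essential period $q_{i+1}$ is already witnessed by least periods attained at interior positions of $[0,q_i-1]$, together with $n_i\to\infty$ to absorb the fixed-width overhang into a single additional level. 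It is in resolving this boundary case that the constructiveness and expansiveness hypotheses do their essential work.
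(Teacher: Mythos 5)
Your reduction is the same as the paper's: by optimality (Lemma~\ref{main}) it suffices to show $|\theta_{(0,i+1]}|$ divides $|\xi_{(0,n_i+1]}|$ for all large $i$, and your transfer principle for interior positions (a window $[m-R,m+R]\subseteq {\rm Per}_P(x)$ forces $m\in{\rm Per}_P(y)$) is sound. But the proof has a genuine gap exactly where you flag "the delicate point": the right-boundary positions $m\in[q_i-R,q_i-1]$ in the case $p_{n_i}=q_i$. There the window $[m-R,m+R]$ overhangs $[0,p_{n_i}-1]$ into positions that are only guaranteed to lie in ${\rm Per}_{p_{n_i+2}}(x)$ (since $[0,p_{n_i+1}-1]\subseteq{\rm Per}_{p_{n_i+2}}(x)$ is all that constructiveness gives), so your method only yields $q_{i+1}\mid p_{n_i+2}$, i.e.\ $n_{i+1}\le n_i+2$, not the sharp conclusion. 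Your proposed repair --- that constructiveness forces the essential period $q_{i+1}$ of $y_{[0,q_i-1]}$ to be already witnessed at interior positions --- is not a consequence of the definition: constructiveness only says the lcm of the least periods over \emph{all} of $[0,q_i-1]$ equals $q_{i+1}$, and nothing prevents the last position $q_i-1$ from being the unique one contributing the new factor. Note also that the degenerate case $p_{n_i}=q_i$ is not a corner case one can hope to exclude: it is precisely the situation of the conjugacy statement in Theorem~\ref{thm:main}, so the unresolved case is the one that matters most.

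The paper avoids this boundary problem structurally rather than fixing it: it never invokes Curtis--Hedlund--Lyndon with a positive radius on the original subshifts. Instead it uses the localization of $\pi$ as radius-$0$ sliding block codes $\pi_j$ between the intermediate shifts $(X_{n_j},\sigma)\to(Y_j,\sigma)$ (Theorem~\ref{inversesys}, Proposition~\ref{factor1}), together with the fact that for $i\ge i_0$ all minimal edges above level $n_{i_0}$ share a source, so $\eta_i(w_1^{n_i})=v_1^i$. The tower word ${\bf w}=\xi_{(1,n_i]}(w_1^{n_i})$ occurs in $\tilde\tau_{n_j}(x_{\min})$ at every multiple of $|\xi_{(0,n_i+1]}|$, and because the codes have radius $0$ this occurrence pushes down \emph{without any overhang} to a periodic occurrence of ${\bf v}=\theta_{(1,i]}(v_1^i)$ in $\tilde\tau_j(y_{\min})$ with period $|\xi_{(0,n_i+1]}|$; constructiveness of $(|\theta_{(0,n]}|)_n$ then identifies the essential period of ${\bf v}$ as $|\theta_{(0,i+1]}|$, giving the divisibility. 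To salvage your argument you would need either to pass to this radius-$0$ framework or to supply a genuinely new argument for the overhanging positions; as written, the step you defer is the whole difficulty.
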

\begin{proof}
We prove that for every $i\geq 1$,  if the sources of the minimal edges with range $W_{n_i+1}$ (resp. $V_{i+1}$) are the same vertex $w_1^{n_i}$ (resp. $v_1^i$), existence of  $\eta_i: W_{n_i}^*\rightarrow V_i^*$ will lead to  
$$|\theta_{(0,i+1]}| \ {\rm divides}\ |\xi_{(0,n_i+1]}|,$$
which implies that $\eta_{i+1}:W_{n_i+1}\rightarrow V_{i+1}$ and so $n_{i+1}=n_i+1$. Note that the ordered Bratteli diagram guaranteed by Theorem~\ref{auto} satisfies this property on the minimal edges at all levels. However, in assuming that our sequence $(n_i)_{i\geq 0}$ is optimal, we may have had  to modify the diagram as in Lemma \ref{firstlevel}, in which case we may lose this condition on a finite number of levels. 
So  let $i_0\geq 1$ be the minimum $i$ such that the minimal edge terminating at every vertex in $W_k$ has the same source for every  $k\geq n_i$. 
Now for every  $i\geq i_0$ and $\eta_i:W_{n_i}^*\rightarrow V_i^*$, since $\pi({x_{\min}})={y_{\min}}$, we have
\begin{equation}\label{proper1}
 \eta_i(w^{n_i}_1)=v^i_1.
\end{equation}
Consider the following diagram.
\begin{equation}\label{dig5}
\xymatrix{\ W_0\ar[d]_{\eta_0}&W_{1}\ar[l]_{\xi_1}&W^*_{n_i}\ar[d]_{\eta_{i}}\ar[l]_{\xi_{(1,n_i]}}
& \ W_{n_i+1}^*\ar[l]_{_{\xi_{n_i+1}}}&\ \ \ W_{n_j}^*\ar[l]_{_{\xi_{(n_i+1,n_j]}}}\ar[ld]^{\eta_{j}}  \\
V_0&\ar[l]^{\theta_1}V_1^*&V^*_{i}\ar[l]^{\theta_{(1,i]}}
&  V_{j}^*\ar[l]^{\theta_{(i,j]}} & 
 }
 \end{equation}
 The image of every vertex $w\in W_{n_i+1}$ under the morphism
$\eta_i\xi_{n_i+1}$ belongs to $V_i^*$ and by the minimal edges condition, its prefix is $v^i_1$. In other words,
\begin{equation}\label{key1} 
\forall\ w\in W_{n_i+1}:\ \exists u\in V_i^*,\ \eta_i(\xi_{n_i+1}(w))=v_1^iu.
\end{equation} 
 Moreover,  by commutativity of diagram (\ref{dig5}),
\begin{equation}\label{dig6}
\forall w\in W_{n_i+1}:\   |\theta_{(0,i]}(\eta_i(\xi_{{n_{i}+1}}(w)))|=|\xi_{(0,n_i+1]}|.
\end{equation}
Indeed,  for every $j>i$:
\begin{equation}\label{dig7}
\forall w\in W_{n_j}:\  \theta_{(0,i]}(\eta_i(\xi_{(n_i,n_j]}(w)))=\theta_{(0,j]}(\eta_j(w)). 
\end{equation}
 Consider  the truncation map $\tau_{n_j}:X_B\rightarrow {\mathcal P_{n_j}}$, and the intermediate  shift $(X_{n_j},\sigma)$ as defined at the end of Section~\ref{versh}, and the point 
$$\tilde\tau_{n_j}(x_{\min})=(\tau_{n_j}(T_B^n(x_{\min})))_{n\geq 0}\in X_{n_j}.$$
Theorem \ref{inversesys} gives us the following commutative diagram:
{\small  \begin{equation}\label{dig8}
\xymatrix{(X_1,\sigma)\ar[d]_{\pi_{1}}
&(X_{n_i},\sigma)\ar[l]_{\tilde\tr_{(1,n_i]}}\ar[d]_{\pi_{i}}&\ (X_{n_i+1},\sigma)\ \  \ar[l]_{\tilde\tr_{n_i+1}}&\ \ (X_{n_j},\sigma)\ar[l]_{\tilde\tr_{(n_i+1,n_j]}}\ar[d]_{\pi_j}& (X_B,T_B,x_{\min})\ar[l]_{\tilde\tau_{n_j}}\ar[d]_{\pi} \\
(Y_{1},\sigma)
&(Y_i,\sigma)\ar[l]^{\tilde\tr_{(1,i]}}&\ (Y_{i+1},\sigma)\ \ \ar[l]^{\tilde\tr_{i+1}}&\ \ (Y_j,\sigma)\ar[l]^{\tilde\tr_{(i+1,j]}}&(X_C,T_C,y_{\min})\ar[l]_{\tilde\tau_{j}}\   
 }
\end{equation}
}
Here we assumed without loss of generality that $|\theta_1|=|\xi_1|=1$ (otherwise, we can perform a symbol splitting to achieve this). 
So  the word:
$${\bf w}=\tau_{n_i}(x_{\min})\tau_{n_i}(T_B(x_{\min}))\cdots \tau_{n_i}(T_B^{|\xi_{(0,n_i]}|-1}(x_{\min}))=\xi_{(1,n_i]}(w^{n_i}_1)$$ appears periodically in $\tilde\tau_{n_j}(x_{\min})$ with period $|\xi_{(0,n_i+1]}|$. More precisely,  for every $m\geq 0$ and $n=m|\xi_{(0,n_i+1]}|$, the word ${\bf wu_{n}}$,
in which $\bf u_{n}$ is the minimal edge with source $w_1^{n_i}\in W_{n_i}$ that terminates at a vertex in $W_{n_i+1}$ which caries $T_B^{n}(x_{\min})$,  is the initial word of
$\tilde \tau_{n_i+1}(T_B^{n}(x_{\min}))$. Then for  every $j>i$,
the word ${\bf wu_{n}u'_{n}}$ appears as the initial word of $\tilde\tau_{n_j}(T_B^{n}(x_{\min}))$,
where $\bf u'_{n}$ is the finite path from $W_{n_i+1}$ to the vertex in $W_{n_j}$ which carries $x_{\min}$.
\smallskip

Now consider ${\bf v}=\theta_{(1,i]}(v_1^i)\sqsubset\eta_i({\bf w})$ of length $|{\bf v}|=|\theta_{(0,i]}|$, defined as
 $${\bf v}=\tau_{i}(y_{\min})\tau_{i}(T_C(y_{\min}))\cdots \tau_{i}(T_C^{|\theta_{(1,i]}|-1}(y_{\min})).$$
By (\ref{key1})-(\ref{dig6}) and commutativity of the diagram  \ref{dig8},  for every $j>i$, the word $\bf v$ appears periodically in  $\tilde\tau_{j}(y_{\min})$ with period $|\xi_{(0,n_i+1]}|$.  
\noindent
  But by the assumptions on $C$, $y_{\rm min}$ has period structure $(|\theta_{(0,n]}|)_{n\geq 1}$ and since the period structure is constructive,  the essential period of the word $\bf v$ to appear in $\tilde\tau'_j(y_{\rm min})$ is equal to $|\theta_{(0,i+1]}|$. Consequently, $|\theta_{(0,i+1]}|$ divides $|\xi_{(0,n_i+1]}|$.  
\end{proof}
Now we have the tools to study topological factoring between two Toeplitz shifts.  
We first have the following proposition as a direct corollary  of Lemma \ref{proper}.
\begin{proposition}\label{general1}
Let $(X,\sigma)$ and $(Y, \sigma)$ be two  Toeplitz shifts. Suppose  that  $x$ and $y$ are Toeplitz sequences in $X$ and $Y$ with constructive period structures $(p_i)_{i\geq 1}$  and $(q_i)_{i\geq 1}$,  respectively. If there exists a topological factor map $\pi:(X,\sigma)\rightarrow (Y,\sigma)$ with $\pi(x)=y$, then there exists $i_o\geq 1$ such that for every $i\geq 1$, $q_i\mid p_{i_0+i}$. 
\end{proposition}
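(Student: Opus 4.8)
The plan is to read off Proposition \ref{general1} as a direct translation of Lemma \ref{proper} into the language of Toeplitz period structures, using the dictionary established in Section \ref{versh}. First I would invoke Theorem \ref{auto} together with the refinement stated immediately after it: since $x$ and $y$ are Toeplitz sequences with constructive period structures $(p_i)_{i\geq 1}$ and $(q_i)_{i\geq 1}$, the shifts $(X,\sigma)$ and $(Y,\sigma)$ admit Bratteli--Vershik realizations on ERS properly ordered Bratteli diagrams $B=(W,E,(\xi_i)_{i\geq 1})$ and $C=(V,E',(\theta_i)_{i\geq 1})$ whose $n$-th adjacency matrices have row sum $p_n/p_{n-1}$ and $q_n/q_{n-1}$ respectively (with $p_0=q_0=1$). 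This is precisely arranged so that $|\xi_{(0,i]}|=p_i$ and $|\theta_{(0,i]}|=q_i$ for every $i$, and so that the minimal edges at each level share a common source. Under this realization the unique minimal paths $x_{\min}$ and $y_{\min}$ correspond to $x$ and $y$, so a factor map $\pi$ with $\pi(x)=y$ becomes a factor map with $\pi(x_{\min})=y_{\min}$, which is exactly the hypothesis of Lemma \ref{proper}.

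Next I would pass the factor map through the Bratteli--Vershik formalism. By Proposition \ref{factor1}, $\pi$ is realized by some sequence of morphisms $\eta=(\eta_i)_{i\geq 0}$ at some levels $(n_i)_{i\geq 0}$. Applying Lemma \ref{firstlevel} and Lemma \ref{main} in turn, I would replace $(n_i)_{i\geq 0}$ by an \emph{optimal} sequence, meaning for each $i\geq 1$, $n_i$ is the least integer exceeding $n_{i-1}$ with $|\theta_{(0,i]}|$ dividing $|\xi_{(0,n_i]}|$; equivalently $q_i\mid p_{n_i}$. This optimality is exactly the standing hypothesis needed to apply Lemma \ref{proper}.

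Now Lemma \ref{proper} applies verbatim and yields an index $i_0$ with $n_{i+1}=n_i+1$ for all $i\geq i_0$. I would then extract the arithmetic conclusion from this stabilization. Unwinding the definition of optimality, for each $i\geq 1$ we have $q_i=|\theta_{(0,i]}|$ dividing $|\xi_{(0,n_i]}|=p_{n_i}$, so the divisibility is already encoded in the optimal sequence; the content of Lemma \ref{proper} is that the gaps $n_{i+1}-n_i$ are eventually all equal to $1$. Writing $c:=n_{i_0}-i_0$, stabilization gives $n_{i_0+k}=n_{i_0}+k=i_0+k+c$ for all $k\geq 0$, i.e.\ $n_{i_0+k}=(i_0+c)+k$. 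Setting the shift constant and re-indexing, for every $i\geq 1$ one has $q_i\mid p_{n_i}=p_{i_0'+i}$ for an appropriate fixed $i_0'\geq 1$ (the offset produced by the eventual constant gap), which is the claimed statement.

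The main obstacle I expect is purely bookkeeping: tracking the offset between the index $i$ of the $q$-period structure and the level $n_i$ at which the corresponding morphism lands, so that the conclusion comes out in the clean shifted form $q_i\mid p_{i_0+i}$. Two subtleties deserve care. First, Lemma \ref{firstlevel} may modify $B$ to $B'$ on finitely many levels, so the identification $|\xi_{(0,i]}|=p_i$ must be understood to hold for the realization furnished by Theorem \ref{auto} \emph{before} the optimization, and one must check that the optimization does not corrupt the period-structure interpretation at the finitely many affected levels --- this is precisely why $i_0$ rather than $0$ appears. Second, one must confirm that the eventual gap is genuinely $1$ (not merely constant), which is exactly what Lemma \ref{proper} delivers and what forces $n_i$ to grow by one with each increment of $i$, giving a single additive offset rather than a multiplicative drift between the two structures.
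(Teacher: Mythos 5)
Your proposal is correct and follows essentially the same route as the paper, which states this proposition without further argument as a direct corollary of Lemma \ref{proper}; you have simply unpacked the chain Theorem \ref{auto} $\to$ Proposition \ref{factor1} $\to$ Lemmas \ref{firstlevel}, \ref{main}, \ref{proper} and the resulting index bookkeeping. The only detail worth making explicit is that for the finitely many indices $i<i_0$ one gets $q_i\mid p_{n_i}$ with $n_i\leq i_0'+i$, and then $p_{n_i}\mid p_{i_0'+i}$ by the divisibility built into a period structure, so the uniform shifted form holds for all $i\geq 1$.
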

 The proof of the next proposition is based on  the main point of the arguments used in the proof of Lemma \ref{proper}.
 \begin{proposition}\label{general2}
Let $(X,\sigma)$ and $(Y, \sigma)$ be two  Toeplitz shifts with $x_0\in X$ as a Toeplitz sequence. If there exists a topological factor map $\pi:(X,\sigma)\rightarrow (Y,\sigma)$, then $\pi(x_0)$   is a Toeplitz sequence in $Y$.
\end{proposition}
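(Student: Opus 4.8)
The plan is to verify the two defining properties of a Toeplitz sequence given in Section~\ref{toeplitz} directly for the point $\pi(x_0)$: that it is quasi-periodic and that it is non-periodic.

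First I would check that quasi-periodicity is transported through the factor map, which is purely formal. Let $U\subseteq Y$ be an arbitrary open neighbourhood of $\pi(x_0)$. By continuity of $\pi$, the set $\pi^{-1}(U)$ is an open neighbourhood of $x_0$ in $X$. Since $x_0$ is Toeplitz and hence quasi-periodic, there is some $p>0$ with $\sigma^{np}(x_0)\in\pi^{-1}(U)$ for every $n\geq 1$. Applying $\pi$ and using the equivariance $\pi\circ\sigma=\sigma\circ\pi$ gives $\sigma^{np}(\pi(x_0))=\pi(\sigma^{np}(x_0))\in U$ for every $n\geq 1$; as $U$ was an arbitrary neighbourhood of $\pi(x_0)$, this shows $\pi(x_0)$ is quasi-periodic.

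Next I would establish that $\pi(x_0)$ is non-periodic. Because $(Y,\sigma)$ is a Toeplitz shift, we have $Y=\overline{\mathcal{O}(y)}$ for an aperiodic Toeplitz sequence $y$, so $Y$ is infinite, and $(Y,\sigma)$ is minimal. A minimal infinite system carries no periodic points: a point $z$ with $\sigma^{k}(z)=z$ for some $k\geq 1$ would have a finite, closed, $\sigma$-invariant orbit, which by minimality would have to equal $Y$, contradicting that $Y$ is infinite. In particular $\pi(x_0)\in Y$ is aperiodic. Combining the two steps, $\pi(x_0)$ is a non-periodic, quasi-periodic point of the shift $(Y,\sigma)$, i.e. a Toeplitz sequence.

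The substantive point---and the only place where anything can go wrong---is the non-periodicity, since quasi-periodicity by itself does not separate Toeplitz points from periodic ones; here it is settled cleanly by the minimality and infiniteness of the target shift. I regard this short topological argument as the main proof. In keeping with the framework behind Lemma~\ref{proper}, one could alternatively produce a period structure for $\pi(x_0)$ explicitly: realize $\pi$ by morphisms $(\eta_i)_{i\geq 0}$ as in Proposition~\ref{factor1}, and track how the periodic skeleton of $x_0$ encoded by the towers of the K--R partitions $\mathcal{Q}_{n_i}$ of $X$ is pushed forward, via $\eta_i$, to a periodic skeleton of $\pi(x_0)$ relative to the partitions $\mathcal{P}_i$ of $Y$; the constructive period structure of $Y$ then identifies the resulting essential periods exactly as in the final paragraph of the proof of Lemma~\ref{proper}.
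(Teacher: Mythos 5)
Your argument is correct, and it is considerably more elementary than the one in the paper. You verify the two defining properties directly: quasi-periodicity passes to $\pi(x_0)$ because $\pi^{-1}(U)$ is an open neighbourhood of $x_0$ and $\pi$ intertwines the shifts, and non-periodicity is automatic because $(Y,\sigma)$ is an infinite minimal system and hence carries no periodic points. Since the paper defines a Toeplitz sequence precisely as a non-periodic quasi-periodic point of a shift (Section~\ref{toeplitz}), and indeed invokes exactly this criterion in the last line of its own proof, your two observations close the argument. The paper instead routes the proof through the Bratteli--Vershik machinery: it realizes $\pi$ by a sequence of morphisms $(\eta_i)_{i\geq 0}$ as in Proposition~\ref{factor1}, reruns the periodicity argument from the proof of Lemma~\ref{proper} to show that each word $\theta_{(1,i]}(v_1^i)$ occurs in $\tilde\tau_i(y_{\min})$ periodically with period $p_{n_i}$, and then passes from the intermediate shifts $(Y_i,\sigma)$ back to $(Y,\sigma)$ using expansivity. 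What that heavier route buys is quantitative information --- an explicit periodic skeleton for $\pi(x_0)$ whose periods are drawn from the constructive period structure of $x_0$, which is the content that feeds Proposition~\ref{general1} and Theorem~\ref{thm:main} --- whereas your proof establishes the qualitative statement of Proposition~\ref{general2} with essentially no machinery. Your closing remark about tracking the skeleton of $x_0$ through the morphisms $\eta_i$ is, in effect, a sketch of the paper's actual proof, so you have both routes in hand; the only point worth making explicit in the short version is the (standard, and here definitional) equivalence between the dynamical characterization ``non-periodic and quasi-periodic'' and the existence of a period structure.
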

\begin{proof}
Let $ y_0 = \pi(x_0)$. Consider a period structure for $x_0$ with the corresponding constructive period structure $(p_i)_{i \geq 1}$, which leads to the  Bratteli-Vershik realization $(X_B,T_B)$ of $(X,\sigma,x_0)$  through the properly ordered ERS Bratteli diagram $B = (W, E, (\xi_i)_{i \geq 1})$ where $|\xi_i| = p_i$. Let $(X_C,T_C)$ be any properly ordered and simple Bratteli diagram associated with $(Y, \sigma, y_0)$ through the Bratteli diagram $C = (V, E', (\theta_i)_{i \geq 1})$  where $|\theta_{(1,i]}(v_1^i)|$ is strictly increasing. By Theorem \ref{inversesys}, there exist a sequence $(n_i)_{i\geq 1}$ and a sequence of morphisms $(\eta_i)_{i \geq 0}$  that realizes the factor map $\pi$ between the two Bratteli diagrams. The arguments from the proof of Lemma \ref{proper}, specifically from equation \ref{proper1} to the second line of the final paragraph, shows that for every  $i\geq 1$, the word ${\bf v_i}:=\theta_{(1,i]}(v_1^i)$  appears in $\tilde\tau_i(y_{\min})$  periodically with period $p_{n_i}$. This implies that in $(Y_i,\sigma)$, the point $\tilde\tau_i(y_{\min})$ is quasi-periodic.  As $(Y,\sigma)$ and so $(X_C,T_C)$ is not  odometer, for sufficiently large $i\geq 1$, $(Y_i,\sigma,\tilde\tau_i(y_{\min}))$ is topologically conjugate to $(X_C,T_C,y_0)$. Moreover, $\tilde\tau_i(y_{\min})$ is not periodic while it is quasi-periodic. So it  is a Toeplitz sequence.
\end{proof}
 
\smallskip
\noindent{\bf Proof of Theorem \ref{thm:main}.}
 Suppose that $\pi:(X,\sigma)\rightarrow (Y,\sigma)$ with $\pi(x)=y$  exists and $q\nmid p$.  Note that the period structure $(q^i)_{i\geq 1}$  defines the maximal equicontinuous factor of $(Y,\sigma)$, which must be  an equicontinuous factor of $(X,\sigma)$. So we write $ p=p_1^{\ell_1}\cdots p_k^{\ell_k}$  and $q=p_1^{\ell'_1}\cdots p_k^{\ell'_k}$ and assume that  for every $1\leq j\leq k$,  $\ell_j, \ell'_j> 0$.  Consider the Bratteli-Vershik realizations $(X_B,T_B)$ and $(X_C,T_C)$ of the two systems through ERS proper ordered Bratteli diagrams $B=(W,E,(\xi_i)_{i\geq 1})$ and $C=(V,E',(\theta_i)_{i\geq 1})$ respectively, so that $x_{\min}\in X_B$ and $y_{\min}\in X_C$ are associated to $x$ and $y$ respectively. So
 $$|\xi_{(0,i]}|=(p_1^{\ell_1}\cdots p_k^{\ell_k})^i,\ \ {\rm and}\ \ |\theta_{(0,i]}|=(p_1^{\ell'_1}\cdots p_k^{\ell'_k})^i.$$
By Proposition \ref{factor1} and Lemma \ref{proper} and using their notation,  existence of factoring $\pi$  between the two systems with $\pi(x)=y$ implies that there exist sequences $(n_i)_{i\geq 0}$,  $\eta=\(\eta_i\)_{i\geq 0}$ and $i_0\geq 1$ so that for every  $i\geq 0$, $\eta_{i_0+i}:W_{n_{i_0}+i}^*\rightarrow V_i^*$ and  the following diagram 
\begin{equation}\label{local}
\xymatrix{W_{n_{i_0}+i}^*\ar[d]_{\eta_{i_0+i}}
& \ \ W_{n_{i_0}+i+1}^*\ar[l]_{_{\xi_{{n_{i_0}+i+1}}}}\ar[d]_{\eta_{i_0+i+1}} &\ \  \\
\ V_{i_0+i}^*
& \ \ V_{i_0+i+1}^*\ar[l]^{\theta_{i_0+i+1}} &\   
 }
\end{equation}
Commutes. Thus, 
\begin{equation}\label{division}
 \forall i\geq 1,\ |\theta_{(0,i_0+i]}| \ {\rm divides}\  |\xi_{(0,n_{i_0}+i]}|.  
 \end{equation}
 Since $q\nmid p$, there exists  $1\leq s\leq k$ such that
 $\ell_s/\ell's<1$. 
Now by  (\ref{division}), for every $i\geq 1$,
$$(\ell_sn_{i_0}-\ell'_si_0)+(\ell_s-\ell'_s)i\geq 0\ \Rightarrow \ (\ell_s-\ell'_s)\geq -(\ell_sn_{i_0}-\ell'_si_0)/i,\ \forall i\geq 1.$$
Note that $\ell_sn_{i_0}-\ell'_si_0$  is a positive constant.
Therefore,
$\ell_s\geq \ell'_s$ which is a contradiction. 
More precisely, by Proposition \ref{factor1}, existence of $\pi$ implies that for every $i\geq 1$ there exists $j\geq i$ such that we have a morphism $\eta_{i_0+i}:W_{n_{i_0}+j}^*\rightarrow V_{i_0+i}^*$. Moreover, Lemma \ref{proper} ensures that  $j=i$. But  $q\nmid p$  enforces that $j>i$.
\qed
\medskip

Restricted to substitutional Toeplitz shifts, Theorem~\ref{thm:main} is related to a topological version of Cobham's theorem. See Subsection \ref{subs} for the notation.
\smallskip

 \begin{corollary}\label{Cobham}
Let $u\in A^\N$ be  $p$-automatic, generated by $(\tau,\theta)$, and let $(X,\sigma)$ be the shift generated by $u$.  Suppose that   $\theta$ has a coincidence and is primitive and aperiodic. If $p$ and $q$ are multiplicatively independent and $u$ is  also $q$-automatic, then $X$ is finite. 
\end{corollary}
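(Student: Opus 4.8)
The plan is to deduce Corollary~\ref{Cobham} from Theorem~\ref{thm:main} by realizing both the $p$-automatic and $q$-automatic structures as Toeplitz shifts and comparing their constructive period structures. First I would invoke the hypotheses on $\theta$: since $\theta$ is primitive, aperiodic, and has a coincidence, Dekking's theorem (quoted in Section~\ref{subs}) tells us that $(X_\theta,\sigma)$ is Toeplitz, being a constant-height-$h$ suspension whose pure base has a coincidence and height $h=1$, with maximal equicontinuous factor the odometer $(\Z_{\bf p},+1)$ where ${\bf p}=(p^i h)_{i\ge 1}$. Because $u$ is generated by $(\tau,\theta)$, the shift $(X,\sigma)$ generated by $u$ is a topological factor of $(X_\theta,\sigma)$ via the code $\tau$; as $\tau$ is a sliding block code it preserves the Toeplitz nature of the relevant sequences, so by Proposition~\ref{general2} the image $u$ (hence a suitable point of $X$) is Toeplitz, and $(X,\sigma)$ is itself a Toeplitz shift with a constructive period structure whose terms are powers of $p$ times a fixed coprime factor.

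Next I would run the same argument on the $q$-side. The assumption that $u$ is also $q$-automatic means, via the equivalence recalled at the end of Subsection~\ref{subs}, that $(X,\sigma)$ is topologically conjugate to a length-$q$ substitution shift; I would arrange that this length-$q$ substitution is also primitive and aperiodic with a coincidence so that it too generates a Toeplitz shift whose constructive period structure has terms that are powers of $q$ times a fixed coprime integer. Thus the \emph{same} system $(X,\sigma)$ carries two constructive period structures, one built from powers of $p$ and one from powers of $q$. Applying Theorem~\ref{thm:main} to the identity conjugacy $\pi=\mathrm{id}$ (with $x=y$ the chosen Toeplitz point) forces $p=q$ in the sense of the divisibility/equality of period structures: if $p$ and $q$ were multiplicatively independent, the two period structures could not be made equal, since their prime-power growth rates differ.

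The key step is turning ``$p$ and $q$ multiplicatively independent'' into the contradiction with the conclusion $p=q$ of Theorem~\ref{thm:main}. Concretely, writing the two constructive period structures as $(p^i h)_{i\ge1}$ and $(q^i h')_{i\ge1}$ and feeding them through Proposition~\ref{general1}, I would obtain that for some $i_0$ and all $i\ge1$ the $i$-th term of one structure divides the $(i_0+i)$-th term of the other and vice versa; comparing the exponents of any prime $\ell$ dividing $pq$ yields two linear inequalities in $i$ whose leading coefficients are $\log_\ell$-comparisons of $p$ and $q$, and these can hold for all large $i$ only if the multiplicative growth rates agree, i.e. $p$ and $q$ are multiplicatively dependent. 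Since we assumed them independent, the only escape is that the Toeplitz hypothesis fails to produce an aperiodic system, i.e. $u$ is eventually periodic and $X$ is finite.

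The main obstacle I anticipate is the bookkeeping needed to legitimately present both the $p$-structure and the $q$-structure as constructive period structures of one and the same minimal system on a common Toeplitz point. On the $q$-side one only knows a \emph{conjugacy} to some length-$q$ substitution shift, and to apply Theorem~\ref{thm:main} cleanly one must either check that this conjugate shift satisfies the coincidence/primitivity/aperiodicity hypotheses (so that it is genuinely Toeplitz with a power-of-$q$ constructive structure) or else argue directly at the level of odometers that the rational spectrum of $(X,\sigma)$ simultaneously contains all $p^{-i}$ and all $q^{-i}$, which for multiplicatively independent $p,q$ forces a non-cyclic $\Q$-spectrum incompatible with a single-prime-power growth unless the system degenerates. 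I would handle the potential non-primitivity or loss of coincidence on the $q$-side by passing to the pure base and invoking Dekking's structure theorem once more; once both period structures are genuinely constructive, Theorem~\ref{thm:main} applies verbatim and the multiplicative independence delivers the contradiction, forcing $X$ to be finite.
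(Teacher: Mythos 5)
Your overall strategy is the same as the paper's: realize both automatic structures as Toeplitz/substitution shifts carrying geometric constructive period structures, pass to pure bases to remove the heights, and feed the two structures into Theorem~\ref{thm:main} (equivalently Proposition~\ref{general1}) to contradict multiplicative independence. The final exponent comparison is exactly how the paper's proof of Theorem~\ref{thm:main} concludes, so that part is sound.

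The genuine gap is the one you flag yourself but do not close: Theorem~\ref{thm:main} requires a single pair of points $x$ and $y=\pi(x)$ such that $x$ carries the $(p^{k_p n})_{n\ge 1}$-constructive structure and $y$ carries the $(q^{k_q n})_{n\ge 1}$-constructive structure. Taking $\pi=\mathrm{id}$ with ``$x=y$ the chosen Toeplitz point'' presumes that one and the same point admits both structures, which is precisely what must be proved: the distinguished point on the $p$-side (the unique preimage of the periodic point of the $p$-odometer determined by the coincidence position) has no a priori reason to coincide with, or even lie in the orbit of, the distinguished point on the $q$-side. The paper resolves this by applying Theorem~\ref{thm:main} not to the identity but to the conjugacy $F\colon X_{\zeta_p}\to X_{\zeta_q}$ between the two pure bases, and then invoking the argument of \cite[Proposition 3.24]{CQY} to show that $F(x)$ is a Toeplitz point lying over an eventually periodic point of the $q$-odometer and hence has a constructive period structure $(q^{k_q n})_{n\ge 1}$. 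Your proposed fallback via the rational spectrum cannot substitute for this: for multiplicatively independent $p,q$ with the same prime divisors (e.g.\ $12$ and $18$) the odometers $\Z_{(p^i)}$ and $\Z_{(q^i)}$ coincide, which is exactly why the finer Bratteli--Vershik argument is needed. Two smaller points: on the $p$-side you should pass through \cite[Theorem 22]{MY} (conjugacy to a length-$p$ substitution shift) rather than only through the factor map $\tau$ and Proposition~\ref{general2}, since the latter does not pin the factor's period structure down to powers of $p$ (divisors of $p^n h$ need not be of the form $p^j h'$ when $p$ is composite); and the structures $(p^i h)_{i\ge 1}$ you write are not of the geometric form $(r^i)_{i\ge 1}$ demanded by Theorem~\ref{thm:main}, so the reduction to pure bases is a necessary step, not optional bookkeeping.
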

\begin{proof}
If $u$ does not have a finite orbit under the shift, then its shift-orbit closure $(X_u,\sigma)$ must be topologically conjugate to a  length-$p$ substitution shift $(X_{\eta_p},\sigma)$, by \cite[Theorem 22]{MY}. Furthermore, as the property of being a Toeplitz shift is a topological conjugacy invariant, $\eta_p$ must have a coincidence. We work with the pure base $\zeta_p$  of $\eta_p$, i.e.,  $X_{\eta_p}$ is a height $h_p$ suspension over $X_{\zeta_p}$ for some $h_p$ coprime to $p$,          and there exists $k$, an $0\leq i\leq p^{k}-1$, and a letter $b\in A_\zeta$ such that for all $a\in A_\zeta$, the $i$-th letter of $\zeta_p^k(a)$ equals $b$. Let $i = i_{k-1}p^{k-1}+ i_{k-2}p^{k-2} + \dots +i_0$ where each $i_j\in \{ 0 , \dots p-1\}$. There is a Toeplitz sequence $x$ in $X_{\zeta_p}$ which, under the maximal equicontinuous factor map $\pi_p: X_{\zeta_p}\rightarrow \Z_p$ satisfies $\pi_p^{-1}(z) = \{x \}$, where $z$ is the periodic sequence defined by the word $i_0 i_1 \dots i_{k -1}$. For this point $x\in X_{\zeta_p}$, there exists $ k_p\geq 1$ such that  $x$ has a constructive period structure ${\bf p}=( p^{ k_p n})_{n\geq 1}$. 

 By the same reasoning, if $u$ were also $q$-automatic, then $(X_u,\sigma)$ is also conjugate to a length-$q$ substitution shift $(X_{\eta_q},\sigma)$ with  a coincidence, and with pure base $\zeta_q$  so that $X_{\eta_q}$ is a height $h_q$ suspension over $X_{\zeta_q}$ for some $h_q$ coprime to $q$.
  Since  $(X_{\eta_p}, \sigma)$ and $(X_{\eta_q}, \sigma)$ are each conjugate to $(X_u,\sigma)$, they are conjugate to each other.  Recall that we can assume that $p$ and $q$ have the same prime divisors. Therefore $(X_{\zeta_p}, \sigma)$ and $(X_{\zeta_q}, \sigma)$ are also conjugate to each other via $F:X_{\zeta_p}\rightarrow X_{\zeta_q}$.
  In particular, they have the same maximal equicontinuous factor,  and so one can apply the argument of  \cite[Proposition 3.24]{CQY} to conclude that $F(x)$ is also a Toeplitz sequence that lives in a fibre of the equicontinuous factor which is  defined by an eventually periodic point $y$, and which has a  constructive period structure$(q^{k_qn})_{n\geq 1}$ for some $k_q$. If $p$ and $q$ are multiplicatively independent, then $p^{k_p}$ and $q^{k_q}$ are as well. Now Theorem~\ref{thm:main} gives a contradiction. 
\end{proof}
\medskip


{\bf Acknowledgements.} 
 The research of the  authors  was  supported by the EPSRC grant number EP/V007459/2.


\end{document}